\theoremstyle{definition}
\newtheorem{definition}{Definition}[]
\newtheorem{remark}[definition]{Remark}
\theoremstyle{plain}
\newtheorem{theorem}[definition]{Theorem}
\newtheorem{proposition}[definition]{Proposition}
\newtheorem{lemma}[definition]{Lemma}
\newtheorem{setting}[definition]{Setting}
\newcommand{\norm}[1]{\left\lVert#1\right\rVert}
\definecolor{dkgold}{rgb}{0.639, 0.53, 0.19}
\title{Uniform Convergence Guarantees for the Deep Ritz Method for Nonlinear Problems}
\date{\today}
\author{
    Patrick Dondl \\
    Department of Applied Mathematics\\
    University of Freiburg \\
    Hermann-Herder-Stra\ss e 10, 79104 Freiburg i. Br., Germany \\
    \texttt{patrick.dondl@mathematik.uni-freiburg.de}
   \And
    Johannes M\"uller \\
    Max Planck Institute for Mathematics in the Sciences,\\
    Inselstra\ss e 22, 04103 Leipzig, Germany\\
    \texttt{jmueller@mis.mpg.de}
   \And
    Marius Zeinhofer \\
    Department of Applied Mathematics\\
    University of Freiburg\\
    Hermann-Herder-Stra\ss e 10, 79104 Freiburg i. Br., Germany \\
    \texttt{marius.zeinhofer@mathematik.uni-freiburg.de}
}
\begin{document}

\maketitle

\begin{abstract}
    We provide convergence guarantees for the Deep Ritz Method for abstract variational energies. Our results cover non-linear variational problems such as the $p$-Laplace equation or the Modica-Mortola energy with essential or natural boundary conditions. Under additional assumptions, we show that the convergence is uniform across %
    bounded families of right-hand sides.
\end{abstract}

\section{Introduction}

The idea of the Deep Ritz Method is to use variational energies as an objective function for neural network training %
to obtain a finite dimensional optimization problem that allows to solve the underlying partial differential equation approximately. The idea of deriving a finite dimensional optimization problem from variational energies dates back to \cite{ritz1909neue}, was widely popularised in the context of finite element methods \cite[see, e.g.,][]{braess2007finite} and was recently revived by \cite{weinan2018deep} using deep neural networks. In the following, we give a more thorough introduction to the Deep Ritz Method. Let $\Omega\subset \mathbb R^d$ be a bounded domain and consider the variational energy corresponding to the Lagrangian $L$ and a force $f$
\begin{equation}\label{lagrangian}
    E\colon X\to\mathbb R, \quad E(u) = \int_\Omega L(\nabla u(x), u(x), x) - f(x)u(x)\mathrm dx,
\end{equation}
defined on a suitable function space $X$, usually a Sobolev space $W^{1,p}(\Omega)$. One is typically interested in minimizers of $E$ on subsets $U\subset X$ where $U$ encodes further physical constraints, such as boundary conditions. Here, we consider either unconstrained problems or zero Dirichlet boundary conditions and use the notation $U=X_0$ for the latter case. In other words, for zero boundary conditions, one aims to find
\begin{equation}\label{eq:continuous_optimization_problem}
    u \in \underset{v\in X_0}{\operatorname{argmin}}\int_\Omega L(\nabla v(x),v(x),x) - f(x)v(x)\mathrm dx.
\end{equation}
To solve such a minimization problem numerically, the idea dating back to \cite{ritz1909neue} is to use a parametric ansatz class
\begin{equation}\label{ansatz_space}
    A \coloneqq \{ u_\theta \in X \mid \theta \in \Theta \subset \mathbb{R}^P \} \subset U
\end{equation}
and to consider the finite dimensional minimization problem of finding
\begin{equation*}
    \theta^* \in \underset{\theta \in \Theta}{\operatorname{argmin}}\int_\Omega L(\nabla v_\theta(x),v_\theta(x),x) - f(x)v_\theta(x)\mathrm dx
\end{equation*}
which can be approached by different strategies, depending on the class $A$. For instance, if $A$ is chosen to be a finite element ansatz space or polynomials and the structure of $E$ is simple enough, one uses optimality conditions to solve this problem.

In this manuscript, we focus on ansatz classes that are given through (deep) neural networks. When choosing such ansatz functions, the method is known as the Deep Ritz Method and was recently proposed by \cite{weinan2018deep}. Neural network type ansatz functions possess a parametric form as in \eqref{ansatz_space}, however, it is difficult to impose zero boundary conditions on the ansatz class $A$. To circumvent this problem, one can use a penalty approach, relaxing the energy to the full space, but penalizing the violation of zero boundary conditions, to include these. This means, for a penalization parameter $\lambda > 0$ one aims to find
\begin{equation}\label{eq:deep_ritz_boundary_penalty}
    \theta_\lambda^* \in \underset{\theta \in \Theta}{\operatorname{argmin}}\int_\Omega L(\nabla v_\theta(x),v_\theta(x),x) - f(x)v_\theta(x)\mathrm dx + \lambda \int_{\partial\Omega}u^2_\theta\mathrm ds.
\end{equation}

The idea of using neural networks for the approximate solution of PDEs can be traced back at least to the works of~\cite{lee1990neural, dissanayake1994neural, takeuchi1994neural, lagaris1998artificial}. Since the recent successful application of neural network based methods to stationary and instationary PDEs by~\cite{weinan2017deep, weinan2018deep, sirignano2018dgm}, there is an ever growing body of theoretical works contributing to the understanding of these approaches. For a collection of the different methods we refer to the overview articles by~\cite{beck2020overview, han2020algorithms}.

The error in the deep Ritz method, which decomposes into an approximation, optimization and generalization term, has been studied by~\cite{luo2020two, xu2020finite, duan2021convergence, hong2021priori, jiao2021error, lu2021priori, lu2021machine, muller2021error}. However, those works either consider non essential boundary conditions or  they require a term with a positive potential, apart from~\cite{muller2021error}. This excludes the prototypical Poisson equation, which was originally treated by the deep Ritz method by~\cite{weinan2018deep}. More importantly, those works only study linear problems, which excludes many important applications.%

In this work, we thus study the convergence of the Deep Ritz Method when a sequence of growing ansatz classes $(A_n)_{n\in \mathbb{N}}$, given through parameter sets $\Theta_n$ and a penalization of growing strength $(\lambda_n)_{n\in\mathbb N}$ with $\lambda_n \nearrow \infty$, is used in the optimization problem \eqref{eq:deep_ritz_boundary_penalty} with more modest assumptions on $L$, $f$, and $\Omega$.

Denote a sequence of (almost) minimizing parameters of problem \eqref{eq:deep_ritz_boundary_penalty} with parameter set $\Theta_n$ and penalization $\lambda_n$ by $\theta_n$.
We then see that under mild assumptions on $(A_n)_{n\in\mathbb{N}}$ and $E$, the sequence $(u_{\theta_n})_{n\in\mathbb N}$ of (almost) minimizers converges weakly in $X$ to the solution of the continuous problem, see Theorem~\ref{thm:abstract_gamma_convergence_thm} in Section~\ref{section:plain_gamma_convergence}. We then strengthen this result in Section~\ref{section:uniform_gamma_covergence} where we show that the aforementioned convergence is uniform across certain bounded families of right-hand sides $f$, see Theorem~\ref{absunires}. This means that a fixed number of degrees of freedom in the ansatz class can be used independently of the right hand side to achieve a given accuracy. Alternatively, given a discretization of the space of right hand sides, one may discretize the solution operator that maps $f$ to the minimizer $u$ of \eqref{eq:continuous_optimization_problem} and still obtain a convergence guarantee (although this is not necessarily a viable numerical approach).

To the best of our knowledge, our results currently comprise the only convergence guarantees for the Deep Ritz Method for non-linear problems.
However, since we prove these results using $\Gamma$-convergence methods, no rates of convergence are be obtained -- as mentioned above, for linear elliptic equations some error decay estimates are known.
Our results also do not provide insight into the finite dimensional optimization problem \eqref{eq:deep_ritz_boundary_penalty}
which is a challenging problem in its own right, see for instance \cite{wang2021understanding, courte2021robin}. However, they guarantee that given one is able to solve \eqref{eq:deep_ritz_boundary_penalty} to a reasonable accuracy, one is approaching the solution of the continuous problem \eqref{eq:continuous_optimization_problem}.

Our results are formulated for neural network type ansatz functions due to the current interest in using these in numerical simulations, yet other choices are possible. For instance, our results do apply directly to finite element functions.

The remainder of this work is organized as follows. Section \ref{sec:preliminaries} discusses some preliminaries and the used notation. The main results, namely $\Gamma$-convergence and uniformity of convergence are provided in Sections \ref{section:plain_gamma_convergence} and \ref{section:uniform_gamma_covergence}, respectively. Finally, in Section~\ref{section:examples} we discuss how the $p$-Laplace and a phase field model fit into our general framework.

\section{Notation and preliminaries} \label{sec:preliminaries}

We fix our notation and present the tools that our analysis relies on.

\subsection{Notation of Sobolev spaces and Friedrich's inequality}

We denote the space of functions on \(\Omega\subseteq\mathbb R^d\) that are integrable in $p$-th power by \(L^p(\Omega)\), where we assume that $p\in[1,\infty)$. Endowed with
\[\lVert u\rVert_{L^p(\Omega)}^p \coloneqq \int_\Omega \lvert u\rvert^p\mathrm dx \]
this is a Banach space, i.e., a complete normed space. If \(u\) is a multivariate function with values in \(\mathbb R^m\) we interpret \(\lvert\cdot\rvert\) as the Euclidean norm.
We denote the subspace of \(L^p(\Omega)\) of functions with weak derivatives up to order \(k\) in $L^p(\Omega)$ by \(W^{k,p}(\Omega)\), which is a Banach space with the norm
\[ \lVert u\rVert_{W^{k,p}(\Omega)}^p \coloneqq \sum_{l=0}^k\lVert D^{l} u \rVert_{L^p(\Omega)}^p. \]
This space is called a \emph{Sobolev space} and we denote its dual space, i.e., the space consisting of all bounded and linear functionals on \(W^{k,p}(\Omega)\) by \(W^{k,p}(\Omega)^\ast\). The closure of all compactly supported smooth functions \(\mathcal C_c^\infty(\Omega)\) in \(W^{k,p}(\Omega)\) is denoted by \(W^{k,p}_0(\Omega)\). %
It is well known that if $\Omega$ has a Lipschitz continuous boundary the operator that restricts a Lipschitz continuous function on \(\overline{\Omega}\) to the boundary admits a linear and bounded extension $\operatorname{tr}\colon W^{1,p}(\Omega)\to L^p(\partial\Omega)$. %
This operator is called the \emph{trace operator} and its kernel is precisely \(W^{1,p}_0(\Omega)\). Further, we write \(\lVert u\rVert_{L^p(\partial\Omega)}\) whenever we mean \(\lVert \operatorname{tr}(u)\rVert_{L^p(\partial\Omega)}\). In the following we mostly work with the case $p=2$ and write $H^k_{(0)}(\Omega)$ instead of $W^{k,2}_{(0)}(\Omega)$. %

In order to study the boundary penalty method we use the Friedrich inequality which states that the \(L^p(\Omega)\) norm of a function can be estimated by the norm of its gradient and boundary values. We refer to \cite{graser2015note} for a proof.  %

\begin{proposition}[Friedrich's inequality]\label{poinclemma}
    Let \(\Omega\subseteq\mathbb R^d\) be a bounded and open set with Lipschitz boundary \(\partial\Omega\) and $p\in(1,\infty)$. Then there exists a constant \(c>0\) such that
    \begin{equation}\label{Poincarefinal}
    \lVert u\rVert_{W^{1,p}(\Omega)}^p\le c^p \cdot \left(\lVert \nabla u\rVert_{L^p(\Omega)}^p + \lVert u\rVert_{L^p(\partial\Omega)}^p\right) \quad \text{for all } u\in W^{1,p}(\Omega).
    \end{equation}
\end{proposition}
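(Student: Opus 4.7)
The plan is to establish Friedrich's inequality via the classical compactness-and-contradiction method that is standard for Poincaré-type inequalities. Since the inequality is invariant under scaling of $u$, the natural move is to normalize and look for a sequence along which the claimed bound degrades, then extract a nontrivial limit whose properties force a contradiction.

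First I would assume the inequality fails for every constant $c$. This yields a sequence $(u_n)\subset W^{1,p}(\Omega)$ with $\|u_n\|_{W^{1,p}(\Omega)}=1$ and
\begin{equation*}
\|\nabla u_n\|_{L^p(\Omega)}^p+\|u_n\|_{L^p(\partial\Omega)}^p\longrightarrow 0.
\end{equation*}
Since $W^{1,p}(\Omega)$ is reflexive for $p\in(1,\infty)$, the normalization gives a (not relabeled) subsequence with $u_n\rightharpoonup u$ weakly in $W^{1,p}(\Omega)$. The Rellich--Kondrachov theorem, valid on bounded Lipschitz domains, upgrades this to strong convergence $u_n\to u$ in $L^p(\Omega)$. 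Weak lower semicontinuity of the $L^p$-norm applied to the gradients forces $\nabla u=0$ almost everywhere, so $u$ is constant on each connected component of $\Omega$.

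Next I would pass to the limit in the boundary term. The trace operator $\operatorname{tr}\colon W^{1,p}(\Omega)\to L^p(\partial\Omega)$ is bounded and linear, hence weakly continuous, so $\operatorname{tr}(u_n)\rightharpoonup \operatorname{tr}(u)$ in $L^p(\partial\Omega)$. Combined with $\|\operatorname{tr}(u_n)\|_{L^p(\partial\Omega)}\to 0$, weak lower semicontinuity gives $\operatorname{tr}(u)=0$. Because $u$ is constant on each connected component of $\Omega$ and every such component has a nontrivial portion of $\partial\Omega$ (as $\Omega$ is bounded with Lipschitz boundary), this constant must be zero on each component, so $u\equiv 0$. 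But then $u_n\to 0$ in $L^p(\Omega)$ and $\nabla u_n\to 0$ in $L^p(\Omega)$ together imply $\|u_n\|_{W^{1,p}(\Omega)}\to 0$, contradicting the normalization.

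The step I expect to require the most care is the passage to the limit on the boundary: one must know that weak $W^{1,p}$-convergence is preserved by the trace (immediate from boundedness and linearity) and that a nonzero locally constant limit would leave a nonzero trace on each component. The latter uses that the Lipschitz regularity of $\partial\Omega$ prevents any connected component of $\Omega$ from having empty boundary, so constants are uniquely determined by their trace. Everything else is a routine application of reflexivity, Rellich--Kondrachov, and weak lower semicontinuity, and no appeal to $\Gamma$-convergence or nonlinear machinery is needed.
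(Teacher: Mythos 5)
The paper does not actually prove Proposition~\ref{poinclemma}: it simply cites \cite{graser2015note} for a proof. Your argument therefore fills a gap rather than duplicating the paper's reasoning, and it is the standard indirect compactness proof: normalize, extract a weak $W^{1,p}$-limit by reflexivity, upgrade to strong $L^p$-convergence by Rellich--Kondrachov, identify the limit as locally constant with vanishing trace, conclude it is zero, and contradict the normalization. All the individual steps are sound. Two points deserve to be made fully explicit, though neither is a real gap. First, the weak lower semicontinuity you invoke applies to the norm, and it transfers to the $p$-th power because $t\mapsto t^p$ is increasing; your phrasing glosses over this but it is routine. Second, the claim that a constant with vanishing trace on each connected component must be zero requires that each component of $\Omega$ contributes a set of positive surface measure to $\partial\Omega$. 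Boundedness alone already guarantees that each component has nonempty topological boundary; the Lipschitz regularity is what guarantees that the trace operator exists and that the boundary of each component carries positive $(d-1)$-dimensional Hausdorff measure, so a nonzero constant would indeed have nonzero trace in $L^p(\partial\Omega)$. With these clarifications spelled out, the argument is complete and is essentially the same proof found in the cited reference and in standard textbooks; the approach buys you a short, self-contained argument at the cost of being non-constructive (no explicit value of $c$ is produced).
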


\subsection{Neural Networks}\label{sec:neural_networks}
Here we introduce our notation for the functions represented by a feedforward neural network. Consider natural numbers \(d, m, L, N_0, \dots, N_L\in\mathbb N\) and let
\[\theta = \left((A_1, b_1), \dots, (A_L, b_L)\right)\]
be a tupel of matrix-vector pairs where \(A_l\in\mathbb R^{N_{l}\times N_{l-1}}, b_l\in\mathbb R^{N_l}\) and \(N_0 = d, N_L = m\). Every matrix vector pair \((A_l, b_l)\) induces an affine linear map \(T_l\colon \mathbb R^{N_{l-1}} \to\mathbb R^{N_l}\). The \emph{neural network function with parameters} \(\theta\) and with respect to some \emph{activation function} \(\rho\colon\mathbb R\to\mathbb R\) is the function
\[u^\rho_\theta\colon\mathbb R^d\to\mathbb R^m, \quad x\mapsto T_L(\rho(T_{L-1}(\rho(\cdots \rho(T_1(x)))))).\]
The set of all neural network functions of a certain architecture is given by $\{ u^\rho_\theta\mid\theta\in\Theta\}$, where $\Theta$ collects all parameters of the above form with respect to fixed natural numbers $d,m,L,N_0,\dots,N_L$. If we have \(f=u_\theta^\rho\) for some \(\theta\in\Theta\) we say the function \(f\) can be \emph{realized} by the neural network $\mathcal F^\rho_{\Theta}$. Note that we often drop the superscript $\rho$ if it is clear from the context.

A particular activation function often used in practice and relevant for our results is the \emph{rectified linear unit} or \emph{ReLU activation function}, which is defined via \(x\mapsto \max\left\{ 0, x\right\}\).
\cite{arora2016understanding} showed that the class of ReLU networks coincides with the class of continuous and piecewise linear functions. In particular they are weakly differentiable. Since piecewise linear functions are dense in \(H^1_0(\Omega)\) we obtain the following universal approximation result which we prove in detail in the appendix.

\begin{theorem}[Universal approximation with zero boundary values]\label{UniversalApproximation}
Consider an open set \(\Omega\subseteq\mathbb R^d\) and fix a function \(u\in W^{1,p}_0(\Omega)\) with $p\in[1,\infty)$. Then for all \(\varepsilon>0\) there exists \(u_\varepsilon\in W^{1,p}_0(\Omega)\) that can be realized by a ReLU network of depth \(\lceil \log_2(d+1)\rceil +1\) such that \[\left\lVert u - u_\varepsilon \right\rVert_{W^{1,p}(\Omega)}\le \varepsilon.\]
\end{theorem}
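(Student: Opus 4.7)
The plan is to chain together three ingredients: (i) density of $C_c^\infty(\Omega)$ in $W_0^{1,p}(\Omega)$, (ii) interior piecewise linear finite-element approximation of smooth compactly supported functions, and (iii) the exact representation of continuous piecewise linear (CPL) functions $\mathbb R^d\to\mathbb R$ as ReLU networks of depth $\lceil\log_2(d+1)\rceil+1$ due to \cite{arora2016understanding}, which the text explicitly flags.

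First I would use the definition of $W_0^{1,p}(\Omega)$ to pick $\varphi\in C_c^\infty(\Omega)$ with $\lVert u-\varphi\rVert_{W^{1,p}(\Omega)}<\varepsilon/2$, and set $K\coloneqq\operatorname{supp}(\varphi)\subset\subset\Omega$. Next, I would choose an intermediate compact set $K\subset K'\subset\subset\Omega$ and a shape-regular simplicial mesh $\mathcal T_h$ of mesh size $h$ that triangulates $K'$ while ensuring every simplex meeting $K$ is contained in $K'$. Let $v_h$ be the Lagrange $P_1$-interpolant of $\varphi$ on $\mathcal T_h$, extended by $0$ outside the triangulated region. Because $\varphi\equiv 0$ on a neighbourhood of $\partial K'$, the interpolant $v_h$ vanishes identically outside $K'$, so $v_h\in W_0^{1,p}(\Omega)$ and is, by construction, a globally continuous piecewise linear function on $\mathbb R^d$. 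Standard interpolation estimates (Ciarlet–Raviart) for smooth $\varphi$ give $\lVert\varphi-v_h\rVert_{W^{1,p}(\Omega)}\to 0$ as $h\to 0$, so I can fix $h$ such that this error is $<\varepsilon/2$.

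Finally, I would invoke the CPL-representation theorem of \cite{arora2016understanding}: every continuous piecewise linear function $\mathbb R^d\to\mathbb R$ can be written exactly as a ReLU network of depth at most $\lceil\log_2(d+1)\rceil+1$. Applying this to $v_h$ produces the desired network function $u_\varepsilon\coloneqq v_h$. The triangle inequality then yields
\[
\lVert u-u_\varepsilon\rVert_{W^{1,p}(\Omega)} \le \lVert u-\varphi\rVert_{W^{1,p}(\Omega)} + \lVert \varphi - v_h\rVert_{W^{1,p}(\Omega)} < \varepsilon,
\]
while $u_\varepsilon\in W_0^{1,p}(\Omega)$ by step two.

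The main obstacle is conceptually modest but technically fiddly: guaranteeing that the piecewise linear approximant itself lies in $W_0^{1,p}(\Omega)$. Naively interpolating $\varphi$ on a triangulation of $\Omega$ with Lipschitz (possibly curved) boundary can create boundary simplices whose $P_1$-interpolant is nonzero at the boundary; the cleanest workaround is the buffer-set construction above, which exploits that $\varphi$ is compactly supported in $\Omega$ so one never needs to triangulate near $\partial\Omega$. A secondary technicality is aligning the depth convention used here (counting affine layers $T_1,\dots,T_L$) with the convention in \cite{arora2016understanding}, but this is a bookkeeping issue rather than a substantive one.
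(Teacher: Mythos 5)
Your proposal is correct and follows essentially the same strategy as the paper: density of $C_c^\infty(\Omega)$ in $W_0^{1,p}(\Omega)$, then piecewise-linear interpolation of the smooth compactly supported function on a triangulation chosen so the interpolant remains compactly supported in $\Omega$, then the Arora et al.\ exact-representation result for CPL functions by ReLU networks of depth $\lceil\log_2(d+1)\rceil+1$. The only cosmetic difference is that you invoke standard Ciarlet--Raviart $P_1$-interpolation estimates as a black box, whereas the paper proves the needed $W^{1,\infty}$ interpolation bound from scratch in its Lemma~\ref{ApproxLemma} (using uniform continuity of $\varphi$ and $\nabla\varphi$ plus the mean value theorem); likewise your explicit buffer set $K\subset K'\subset\subset\Omega$ plays the same role as the paper's inclusion $\operatorname{supp}(s_\varepsilon)\subseteq\operatorname{supp}(\varphi)+B_\varepsilon(0)$, guaranteeing the interpolant lies in $W_0^{1,p}(\Omega)$.
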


To the best of our knowledge this is the only available universal approximation results where the approximating neural network functions are guaranteed to have zero boundary values. This relies on the special properties of the ReLU activation function and it is unclear for which classes of activation functions universal approximation with zero boundary values hold.

\subsection{Gamma Convergence}\label{section:gamma_convergence}
We recall the definition of $\Gamma$-convergence with respect to the weak topology of reflexive Banach spaces. For further reading we point the reader towards \cite{dal2012introduction}.
\begin{definition}[\(\Gamma\)-convergence]
    Let \(X\) be a reflexive Banach space as well as \(F_n, F\colon X\to(-\infty, \infty]\). Then \((F_n)_{n\in\mathbb N}\) is said to be \emph{\(\Gamma\)-convergent} to \(F\) if the following two properties are satisfied.
    \begin{enumerate}
        \item \emph{Liminf inequality:} For every \(x\in X\) and \((x_n)_{n\in\mathbb N}\) with \(x_n\rightharpoonup x\) we have
            \[F(x) \le \liminf_{n\to\infty} F_n(x_n).\]
        \item \emph{Recovery sequence:} For every \(x\in X\) there is \((x_n)_{n\in\mathbb N}\) with \(x_n\rightharpoonup x\) such that
            \[F(x) = \lim_{n\to\infty} F_n(x_n).\]
\end{enumerate}
    The sequence \((F_n)_{n\in\mathbb N}\) is called \emph{equi-coercive} if the set
        \[\bigcup_{n\in\mathbb N}\Big\{x\in X \mid F_n(x)\le r \Big\} \]
    is bounded in \(X\) (or equivalently relatively compact with respect to the weak topology) for all \(r\in\mathbb R\). We say that a sequence \((x_n)_{n\in\mathbb N}\) are \emph{quasi minimizers} of the functionals \((F_n)_{n\in\mathbb N}\) if we have
    \[F_n(x_n) \le \inf_{x\in X} F_n(x) + \delta_n\]
    where \(\delta_n\to0\).
\end{definition}

We need the following property of \(\Gamma\)-convergent sequences. We want to emphasise the fact that there are no requirements regarding the continuity of any of the functionals and that the functionals \((F_n)_{n\in\mathbb N}\) are not assumed to admit minimizers.

\begin{theorem}[Convergence of quasi-minimizers]\label{thm:gist_of_gamma_convergence}
    Let \(X\) be a reflexive Banach space and \((F_n)_{n\in\mathbb N}\) be an equi-coercive sequence of functionals that \(\Gamma\)-converges to \(F\). Then, any sequence $(x_n)_{n\in\mathbb{N}}$ of quasi-minimizers of $(F_n)_{n\in\mathbb{N}}$ is relatively compact with respect to the weak topology of $X$ and every weak accumulation point of $(x_n)_{n\in\mathbb{N}}$ is a global minimizer of $F$. Consequently, if $F$ possesses a unique minimizer $x$, then $(x_n)_{n\in\mathbb{N}}$ converges weakly to $x$.
\end{theorem}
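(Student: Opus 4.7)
The plan is to follow the standard two-step argument from the theory of $\Gamma$-convergence: first extract a weakly convergent subsequence via equi-coercivity, then identify the limit as a minimizer using the liminf and recovery properties.

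First, I would establish a uniform upper bound on $F_n(x_n)$ in order to invoke equi-coercivity. If $F \equiv +\infty$, the statement is vacuous, so pick any $y \in X$ with $F(y) < \infty$. The recovery sequence property yields $(y_n)_{n \in \mathbb{N}}$ with $y_n \rightharpoonup y$ and $F_n(y_n) \to F(y)$. Since $x_n$ is a quasi-minimizer, $F_n(x_n) \le \inf_{x \in X} F_n(x) + \delta_n \le F_n(y_n) + \delta_n$, and the right-hand side is bounded as $n \to \infty$. Picking $r \in \mathbb{R}$ so that $F_n(x_n) \le r$ for all $n$, equi-coercivity implies $\{x_n : n \in \mathbb{N}\}$ is bounded in $X$, hence weakly relatively compact by reflexivity.

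Next, let $x^\ast$ be any weak accumulation point, so that $x_{n_k} \rightharpoonup x^\ast$ for some subsequence. The liminf inequality applied along this subsequence gives
\[
F(x^\ast) \le \liminf_{k \to \infty} F_{n_k}(x_{n_k}).
\]
To show $x^\ast$ minimizes $F$, fix arbitrary $y \in X$; without loss $F(y) < \infty$. Choose a recovery sequence $y_n \rightharpoonup y$ with $F_n(y_n) \to F(y)$. By quasi-minimality, $F_{n_k}(x_{n_k}) \le F_{n_k}(y_{n_k}) + \delta_{n_k}$, and passing to the liminf yields
\[
F(x^\ast) \le \liminf_{k \to \infty} F_{n_k}(x_{n_k}) \le \lim_{k \to \infty} \bigl(F_{n_k}(y_{n_k}) + \delta_{n_k}\bigr) = F(y).
\]
Since $y$ was arbitrary, $x^\ast \in \operatorname{argmin} F$.

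Finally, suppose $F$ has a unique minimizer $x$. The previous two steps show $(x_n)$ is weakly precompact and every weak accumulation point equals $x$. A standard subsequence argument (if $x_n \not\rightharpoonup x$, extract a subsequence avoiding some weak neighbourhood of $x$, then further extract a weakly convergent sub-subsequence, whose limit must be $x$ — a contradiction) yields $x_n \rightharpoonup x$. I do not expect a serious obstacle here: the only place where care is needed is verifying that the recovery sequence exists at a point of finite $F$, which is immediate once one handles the degenerate case $F \equiv +\infty$ separately.
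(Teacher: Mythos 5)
Your proof is correct and is the standard textbook argument (the paper itself gives no proof of this theorem, pointing the reader to Dal Maso's monograph instead). The only place where a small justification is elided is the step ``the liminf inequality applied along this subsequence'': the $\Gamma$-liminf inequality as stated in the paper is for the full sequence $(F_n)$, and applying it to $(F_{n_k})$ uses the standard fact that $\Gamma$-convergence is inherited by subsequences, which in turn follows since the $\Gamma$-lower limit can only increase and the $\Gamma$-upper limit can only decrease when passing to a subsequence; this is worth a one-line remark but is not a genuine gap.
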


\section{\texorpdfstring{Abstract $\Gamma$-Convergence Result for the Deep Ritz Method}{Abstract Gamma-Convergence Result for the Deep Ritz Method}}\label{section:plain_gamma_convergence}
For the abstract results we work with an abstract energy $E\colon X\to \mathbb{R}$, instead of an integral functional of the form \eqref{lagrangian}. This reduces technicalities in the proofs and separates abstract functional analytic considerations from applications.
\begin{setting}\label{setting:gamma_convergence}
Let $(X,\norm{\cdot}_X)$ and $(B,\norm{\cdot}_B)$ be reflexive Banach spaces and $\gamma \in \mathcal{L}(X,B)$ be a continuous linear map. We set $X_0$ to be the kernel of $\gamma$, i.e.,\ $X_0 = \gamma^{-1}(\{0\})$. Let $\rho\colon\mathbb R \to \mathbb R$ be some activation function and denote by $(\Theta_n)_{n\in\mathbb N}$ a sequence of neural network parameters. We assume that any function represented by such a neural network is a member of $X$ and we define
\begin{equation*}
    A_n \coloneqq \left\{ x_\theta \mid \theta \in \Theta_n \right\} \subset X.
\end{equation*}
Here, $x_\theta$ denotes the function represented by the neural network with the parameters $\theta$. Let $E\colon X\to(-\infty,\infty]$ be a functional and $(\lambda_n)_{n\in \mathbb N}$ a sequence of real numbers with $\lambda_n\to\infty$. Furthermore, let $p\in (1,\infty)$ and $f\in X^*$ be fixed and define the functional $F^f_n\colon X\to(-\infty,\infty]$ by
\begin{align*}
        F^f_n(x) = \begin{cases}\; \displaystyle E(x) + \lambda_n\lVert\gamma(x)\rVert^p_B - f(x) \quad&\text{for }x\in A_n, \\[.4cm] \;\infty &\text{otherwise }, \end{cases}
    \end{align*}
    as well as $F^f\colon X\to(-\infty,\infty]$ by
    \begin{align*}
        F^f(x) = \begin{cases} \; \displaystyle E(x) - f(x) \quad&\text{for } x\in X_0, \\[.4cm] \; \infty   &\text{otherwise }.\end{cases}
    \end{align*}
    Then assume the following holds:
    \begin{enumerate}[label=\textup{(A\arabic*)}]
        \item\label{A1} For every $x\in X_0$ there is %
        $x_n\in A_n$ such that $x_n\to x$ and $\lambda_n\lVert \gamma(x_n) \rVert^p_B \to 0$ for $n\to \infty$.
        \item\label{A2} The functional $E$ is bounded from below, weakly lower semi-continuous with respect to the weak topology of $(X,\norm{\cdot}_X)$ and continuous with respect to the norm topology of $(X,\norm{\cdot}_X)$.
        \item\label{A3} The sequence $(F^f_n)_{n\in\mathbb N}$ is equi-coercive with respect to the norm $\norm{\cdot}_X$.
    \end{enumerate}
\end{setting}
\begin{remark} We discuss the Assumptions \ref{A1} to \ref{A3} in view of their applicability to concrete problems.
\begin{enumerate}
    \item In applications, $(X,\lVert\cdot\rVert_X)$ will usually be a Sobolev space with its natural norm, the space $B$ contains boundary values of functions in $X$ and the operator $\gamma$ is a boundary value operator, e.g. the trace map. However, if the energy $E$ is coercive on all of $X$, i.e. without adding boundary terms to it, we might choose $\gamma = 0$ and obtain $X_0 = X$. This is the case for non-essential boundary value problems.
    \item The Assumption \ref{A1} compensates that in general, we cannot penalize with arbitrary strength. However, if we can approximate any member of $X_0$ by a sequence $x_{\theta_n}\in A_n\cap X_0$ then any divergent sequence $(\lambda_n)_{n\in\mathbb{N}}$ can be chosen. This is for example the case for the ReLU activation function and the space $X_0=H^1_0(\Omega)$. More precisely, we can choose $A_n$ to be the class of functions expressed by a
   (fully connected) ReLU network of depth $\lceil \log_2(d+1)\rceil +1$ and width $n$, see Theorem~\ref{UniversalApproximation}.
\end{enumerate}
\end{remark}
\begin{theorem}[$\Gamma$-convergence]\label{thm:abstract_gamma_convergence_thm}
    Assume we are in Setting \ref{setting:gamma_convergence}. {Then the sequence $(F^f_n)_{n\in\mathbb{N}}$ of functionals $\Gamma$-converges towards $F^f$.} %
    In particular, if $(\delta_n)_{n\in\mathbb{N}}$ is a sequence of non-negative real numbers converging to zero, %
    any sequence of $\delta_n$-quasi minimizers of $F_n^f$ is bounded and all its weak accumulation points are minimizers of $F^f$.
    If additionally $F^f$ possesses a unique minimizer $x^f \in X_0$, any sequence of $\delta_n$-quasi minimizers converges to $x^f$ in the weak topology of $X$. %
\end{theorem}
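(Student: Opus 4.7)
The plan is to establish $\Gamma$-convergence of $(F^f_n)$ to $F^f$ directly from the definition, and then invoke Theorem~\ref{thm:gist_of_gamma_convergence} together with assumption \ref{A3} to conclude the statement about quasi-minimizers.

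For the liminf inequality, I take $x_n \rightharpoonup x$ in $X$ and may assume $\liminf_{n\to\infty} F^f_n(x_n) < \infty$, passing to a subsequence that realizes the liminf as a finite limit. Along this subsequence $x_n \in A_n$, so $F^f_n(x_n) = E(x_n) + \lambda_n\norm{\gamma(x_n)}^p_B - f(x_n)$. Since $E$ is bounded below by \ref{A2} and $f(x_n) \to f(x)$ because $f \in X^*$ is weakly continuous, the boundedness of $F^f_n(x_n)$ forces $\lambda_n\norm{\gamma(x_n)}^p_B$ to be bounded, hence $\norm{\gamma(x_n)}_B \to 0$ because $\lambda_n \to \infty$. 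As $\gamma$ is continuous linear, it is also weakly continuous, so $\gamma(x_n) \rightharpoonup \gamma(x)$; combined with the weak lower semicontinuity of $\norm{\cdot}_B$ this yields $\gamma(x) = 0$, i.e., $x \in X_0$. Using weak lower semicontinuity of $E$ from \ref{A2} and nonnegativity of the penalty term, I conclude
\begin{equation*}
    F^f(x) = E(x) - f(x) \le \liminf_{n\to\infty}\bigl(E(x_n) + \lambda_n\norm{\gamma(x_n)}^p_B - f(x_n)\bigr) = \liminf_{n\to\infty} F^f_n(x_n).
\end{equation*}

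For the recovery sequence, I split on whether $x\in X_0$. If $x \notin X_0$, i.e. $\gamma(x) \neq 0$, the constant sequence $x_n \equiv x$ works: either $x \notin A_n$ for infinitely many $n$ (in which case $F^f_n(x) = \infty$ on that subsequence, and by taking a modification on the remaining indices one arranges $F^f_n(x_n) \to \infty$), or $x \in A_n$ eventually and then $F^f_n(x) = E(x) + \lambda_n\norm{\gamma(x)}^p_B - f(x) \to \infty = F^f(x)$. If $x \in X_0$, assumption \ref{A1} furnishes $x_n \in A_n$ with $x_n \to x$ strongly and $\lambda_n \norm{\gamma(x_n)}^p_B \to 0$. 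Then norm continuity of $E$ from \ref{A2} gives $E(x_n) \to E(x)$, and continuity of $f$ gives $f(x_n) \to f(x)$, so $F^f_n(x_n) \to E(x) - f(x) = F^f(x)$, as required. Weak convergence follows from strong convergence.

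With $\Gamma$-convergence established and equi-coercivity of $(F^f_n)$ provided by \ref{A3}, Theorem~\ref{thm:gist_of_gamma_convergence} immediately gives that any sequence of $\delta_n$-quasi minimizers is relatively weakly compact in $X$ (in particular bounded) and every weak accumulation point minimizes $F^f$; uniqueness of the minimizer of $F^f$ upgrades this to weak convergence of the whole sequence.

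I expect the main obstacle to be the liminf inequality, specifically the step where one must upgrade the soft information ``$\lambda_n\norm{\gamma(x_n)}^p_B$ bounded and $\lambda_n \to \infty$'' into the membership $x \in X_0$ of the weak limit. The argument relies crucially on the interplay between weak continuity of $\gamma$ and weak lower semicontinuity of the norm on $B$, which is where reflexivity of $B$ enters. The other delicate point, handled by \ref{A1}, is that the recovery sequence uses strong rather than merely weak convergence so that the nonlinear energy $E$ can be controlled via its norm continuity.
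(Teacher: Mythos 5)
Your proof is correct and follows essentially the same approach as the paper. The only difference is presentational: in the liminf inequality the paper splits cases on whether $x\in X_0$ or $x\notin X_0$ up front, whereas you assume the liminf is finite and derive $x\in X_0$ as a consequence of the penalty blowing up otherwise; these are logically equivalent, and the recovery-sequence construction and the final appeal to equi-coercivity with Theorem~\ref{thm:gist_of_gamma_convergence} match the paper exactly.
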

\begin{proof}
We begin with the limes inferior inequality.
Let $x_n\rightharpoonup x$ in $X$ and assume that $x\notin X_0$. Then $f(x_n)$ converges to $f(x)$ as real numbers and $\gamma(x_n)$ converges weakly to $\gamma(x)\neq 0$ in $B$. Combining this with the weak lower semicontinuity of $\rVert\cdot\lVert^p_B $ we get, using the boundedness from below, that
\[
\liminf_{n\to\infty}F_n^{f}(x_n) \geq \inf_{x\in X} E(x) + \liminf_{n\to\infty}\lambda_n\lVert\gamma(x_n)\rVert^p_B - \lim_{n\to\infty}f(x_n) = \infty.
\]
Now let $x\in X_0$. Then by the weak lower semicontinuity of $E$ we find
\[
\liminf_{n\to\infty}F_n^{f}(x_n) \geq \liminf_{n\to\infty}E(x_n) - f(x) \geq E(x) -f(x) = F^f(x).
\]
Now let us have a look at the construction of the recovery sequence. For $x\notin X_0$ we can choose the constant sequence and estimate
\[
F^{f}_n(x_n) \geq E(x) + \lambda_n\lVert\gamma(x)\rVert_B^p - f(x).
\]
Hence we find that $F_n^{f_n}(x)\to \infty = F^f(x)$. If $x\in X_0$ we approximate it with a sequence $(x_n)\subseteq X$ according to Assumption \ref{A1}, such that $x_n\in A_n$ and $x_n\to x$ in $\lVert\cdot\rVert_X$ and $\lambda_n\lVert \gamma(x_n) \rVert_B^p \to 0$. It follows that
\[
F_n^{f}(x_n) = E(x_n) + \lambda_n\lVert x_n \rVert_B^p -f(x_n) \to E(x)-f(x) = F^f(x).
\]
\end{proof}
A sufficient criterion for equi-coercivity of the sequence $(F^f_n)_{n\in\mathbb{N}}$ from Assumption~\ref{A3} in terms of the functional $E$ is given by the following lemma. %
\begin{lemma}[Criterion for Equi-Coercivity]\label{lemma:coercivity_condition}
    Assume we are in Setting \ref{setting:gamma_convergence}. If there is a constant $c>0$ such that it holds for all $x\in X$ that
    \begin{equation*}
        E(x) + \lVert \gamma(x) \rVert_B^p \geq c \cdot \left( \norm{x}_X^p - \norm{x} - 1 \right),
    \end{equation*}
    then the sequence $(F^f_n)_{n\in\mathbb{N}}$ is equi-coercive.
\end{lemma}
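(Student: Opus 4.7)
The plan is to establish equi-coercivity directly from its definition: for every $r\in\mathbb{R}$, we must show that the set
\[
S_r \coloneqq \bigcup_{n\in\mathbb{N}}\bigl\{x\in X : F_n^f(x)\leq r\bigr\}
\]
is bounded in $X$. The driving idea is that the hypothesis lower-bounds $E+\|\gamma(\cdot)\|_B^p$ with the boundary term appearing at coefficient $1$, whereas $F_n^f$ carries it with coefficient $\lambda_n\to\infty$, so asymptotically the hypothesis applies verbatim after dropping the extra factor, while only finitely many indices demand a slightly modified bound.

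Concretely, fix $r$ and suppose $x\in A_n$ satisfies $F_n^f(x)\leq r$. Since $f\in X^*$, I have $f(x)\leq\|f\|_{X^*}\|x\|_X$, and hence
\[
E(x)+\lambda_n\|\gamma(x)\|_B^p \leq r+\|f\|_{X^*}\|x\|_X.
\]
Choose $N$ with $\lambda_n\geq 1$ for all $n\geq N$. For such $n$ the left-hand side dominates $E(x)+\|\gamma(x)\|_B^p$, and the coercivity hypothesis yields
\[
c\bigl(\|x\|_X^p-\|x\|_X-1\bigr)\leq r+\|f\|_{X^*}\|x\|_X.
\]
Since $p>1$, this forces $\|x\|_X$ to be bounded by a constant depending only on $r$, $c$, and $\|f\|_{X^*}$, uniformly in $n\geq N$.

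For the remaining indices $n<N$ (finitely many, and with $\lambda_n>0$ since $\lambda_n\to\infty$), the same computation applies after rewriting
\[
E(x)+\lambda_n\|\gamma(x)\|_B^p = \lambda_n\bigl(E(x)+\|\gamma(x)\|_B^p\bigr) + (1-\lambda_n)E(x)
\]
and invoking that $E$ is bounded below by \ref{A2} to control the second summand; the hypothesis still produces a superlinear term $c\lambda_n\|x\|_X^p$ that dominates the linear terms on the right. A finite union of bounded sets being bounded, $S_r$ is bounded, as required.

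I do not anticipate a substantive obstacle: this is a routine coercivity calculation. The only conceptual point worth stressing is that the strict inequality $p>1$ is essential, since it is what lets the superlinear term $c\|x\|_X^p$ absorb both the $-c\|x\|_X$ appearing in the hypothesis and the linear growth $\|f\|_{X^*}\|x\|_X$ from the source. The lower-boundedness of $E$ from \ref{A2} plays a secondary but necessary role in cleaning up the finitely many small-$n$ indices.
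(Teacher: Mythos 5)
Your proof is correct and follows essentially the same route as the paper's: it suffices to bound sublevel sets of $G_n^f(x) = E(x) + \lambda_n\lVert\gamma(x)\rVert_B^p - f(x)$, estimate $f(x)\le\lVert f\rVert_{X^*}\lVert x\rVert_X$, apply the coercivity hypothesis once $\lambda_n\ge 1$, and use $p>1$ to absorb the linear terms. The only difference is cosmetic: the paper compresses the small-$n$ case into a single ``without loss of generality $\lambda_n\ge 1$,'' while you spell out the finitely-many-exceptional-indices argument via the convex-combination rewrite and the lower bound on $E$ from \ref{A2}; both are fine, though note that $\lambda_n\to\infty$ alone guarantees $\lambda_n>0$ only eventually, so strictly speaking one relies on the implicit positivity of the penalty parameters.
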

\begin{proof}
    It suffices to show that the sequence
    \[
    G_n^f\colon X\to\mathbb{R}\quad\text{with}\quad G^f_n(x) = E(x) + \lambda_n\lVert\gamma(x)\rVert_B^p - f(x)
    \]
    is equi-coercive, as $G^f_n\leq F_n$. So let $r\in\mathbb{R}$ be given and assume that $r\geq G^f_n(x)$. We estimate assuming without loss of generality that $\lambda_n \geq 1$
    \begin{align*}
        r &\geq E(x) + \lambda_n\lVert\gamma(x)\rVert_B^p - f(x)
        \\
        &\geq c \cdot \big(\lVert x \rVert_X^p - \lVert x \rVert_X - 1\big) - \lVert f\rVert_{X^*}\cdot\lVert x\rVert_X
        \\
        &\geq \tilde c \cdot \big(\lVert x \rVert_X^p - \lVert x \rVert_X - 1\big).
    \end{align*}
    As $p> 1$, a scaled version of Young's inequality clearly implies a bound on the set
    \[
    \bigcup_{n\in\mathbb{N}}\big\{ x\in X\mid G_n(x) \leq r \big\}
    \]
    and hence the sequence $(F^f_n)_{n\in\mathbb{N}}$ is seen to be equi-coercive.
\end{proof}

\section{Abstract Uniform Convergence Result for the Deep Ritz Method}\label{section:uniform_gamma_covergence}
In this section we present an extension of Setting \ref{setting:gamma_convergence} that allows to prove uniform convergence results over certain bounded families of right-hand sides.
\begin{setting}\label{setting:uniform_gamma_convergence}
    Assume we are in Setting \ref{setting:gamma_convergence}. Furthermore, let there be an additional norm $\lvert\cdot\rvert$ on $X$ such that the dual space $(X,\lvert\cdot\rvert)^*$ is reflexive. However, we do not require $(X,\lvert\cdot\rvert)$ to be complete. Then, let the following assumptions hold
    \begin{enumerate}[label=\textup{(A\arabic*)}]
        \addtocounter{enumi}{3}
        \item\label{A4} The identity $\operatorname{Id}\colon (X,\norm{\cdot}_X)\to(X,\lvert\cdot\rvert)$ is completely continuous, i.e., maps weakly convergent sequences to strongly convergent ones.
        \item\label{A5} For every $f\in X^*$, there is a unique minimizer $x_f\in X_0$ of $F^f$ and the solution map
                \[ S\colon X_0^*\to X_0\quad\text{with } f\mapsto x^f \]
        is demi-continuous, i.e. maps strongly convergent sequences to weakly convergent ones.
    \end{enumerate}
\end{setting}
\begin{remark}
     As mentioned earlier, $(X,\norm{\cdot}_X)$ is usually a Sobolev space with its natural norm. The norm $\lvert\cdot\rvert$ may then chosen to be an $L^p(\Omega)$ or $W^{s,p}(\Omega)$ norm, where $s$ is strictly smaller than the differentiability order of $X$. In this case, Rellich's compactness theorem provides Assumption \ref{A4}.
\end{remark}

\begin{lemma}[Compactness]\label{SchauderCompactness}
    Assume we are in Setting \ref{setting:uniform_gamma_convergence}. Then the solution operator $S\colon (X,\lvert\cdot\rvert)^*\to (X_0,\lvert\cdot\rvert)$ is completely continuous, i.e., maps weakly convergent sequences to strongly convergent ones.
\end{lemma}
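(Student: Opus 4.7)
The plan is to chain two ingredients from Setting \ref{setting:uniform_gamma_convergence}: the demi-continuity of $S\colon X_0^*\to X_0$ from \ref{A5}, which requires strong input convergence and yields weak output convergence, and the complete continuity of $\operatorname{Id}\colon (X,\norm{\cdot}_X)\to (X,\lvert\cdot\rvert)$ from \ref{A4}, which upgrades weak convergence in $(X,\norm{\cdot}_X)$ to strong convergence in $(X,\lvert\cdot\rvert)$. Given $f_n\rightharpoonup f$ in $(X,\lvert\cdot\rvert)^*$, the only missing link is to promote this weak convergence of the right-hand sides to strong convergence in $X^*$ so that it may be fed into $S$.

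This is exactly where Schauder's theorem, reflected in the lemma's name, enters. The identity $\operatorname{Id}\colon (X,\norm{\cdot}_X)\to (X,\lvert\cdot\rvert)$ is bounded because sequential continuity at zero of a linear map between normed spaces implies boundedness. As $(X,\norm{\cdot}_X)$ is reflexive, its closed unit ball is weakly sequentially compact, and combined with \ref{A4} this shows that $\operatorname{Id}$ maps bounded sets to relatively compact ones, i.e.\ $\operatorname{Id}$ is compact (one may pass to the completion of $(X,\lvert\cdot\rvert)$ if desired, which does not affect the dual). The Banach-space adjoint of $\operatorname{Id}$ is precisely the natural inclusion $j\colon (X,\lvert\cdot\rvert)^*\hookrightarrow X^*$, so Schauder's theorem yields that $j$ is compact. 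Since $(X,\lvert\cdot\rvert)^*$ is reflexive by assumption, a compact operator defined on it maps weakly convergent sequences to strongly convergent ones, and therefore $\lVert f_n - f\rVert_{X^*}\to 0$.

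Restricting to $X_0$ gives strong convergence $f_n|_{X_0}\to f|_{X_0}$ in $X_0^*$, and demi-continuity of $S$ then yields $S(f_n)\rightharpoonup S(f)$ weakly in $(X_0,\norm{\cdot}_X)$, hence also in $(X,\norm{\cdot}_X)$. A final application of \ref{A4} converts this weak convergence into $\lvert S(f_n) - S(f)\rvert\to 0$, which is the claim. The main obstacle is the compactness of the inclusion $(X,\lvert\cdot\rvert)^*\hookrightarrow X^*$; an elementary alternative to invoking Schauder is to use Banach-Steinhaus to bound $\lVert f_n - f\rVert_{(X,\lvert\cdot\rvert)^*}$ uniformly, cover the unit ball of $(X,\norm{\cdot}_X)$ by finitely many $\lvert\cdot\rvert$-balls of small radius (possible because $\operatorname{Id}$ maps this ball to a relatively compact set in $(X,\lvert\cdot\rvert)$), and combine this with the pointwise convergence $(f_n - f)(x)\to 0$ that follows from weak convergence in the reflexive dual $(X,\lvert\cdot\rvert)^*$ agreeing with weak-$*$ convergence.
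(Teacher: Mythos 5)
Your argument is correct and essentially coincides with the paper's proof: both factor $S$ through the chain $(X,\lvert\cdot\rvert)^*\hookrightarrow X^*\to X_0^*\xrightarrow{S}X_0\to(X_0,\lvert\cdot\rvert)$, invoke Schauder's theorem on $\operatorname{Id}\colon(X,\lVert\cdot\rVert_X)\to(X,\lvert\cdot\rvert)$ (after passing to the completion) to upgrade weak convergence of $f_n$ to strong convergence in $X^*$, and then chain with demi-continuity of $S$ and Assumption \textup{(A4)}. Your write-up is, if anything, a touch more precise at the final step, where the paper calls the last map ``strongly continuous'' while what is actually used is its complete continuity from \textup{(A4)}; the Banach--Steinhaus covering alternative you sketch is a reasonable elementary substitute for Schauder's theorem but is not needed.
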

\begin{proof}
    We begin by clarifying what we mean with $S$ being defined on $(X,\lvert\cdot\rvert)^*$. Denote by $i$ the inclusion map $i\colon X_0\to X$ and consider
    \begin{align*}
        (X,\lvert\cdot\rvert)^*\xlongrightarrow{\operatorname{Id}^*}       (X,\norm{\cdot}_X)^* \xlongrightarrow{i^*} (X_0,\norm{\cdot}_X)^* \xlongrightarrow{S} (X_0, \norm{\cdot}_X) \xlongrightarrow{\operatorname{Id}}(X_0, \lvert \cdot\rvert).
    \end{align*}
    By abusing notation, always when we refer to $S$ as defined on $(X,\lvert\cdot\rvert)^*$ we mean the above composition, i.e., $\operatorname{Id}\circ S\circ i^*\circ\operatorname{Id}^*$. Having explained this, it is clear that it suffices to show that $\operatorname{Id}^*$ maps weakly convergent sequences to strongly convergent ones since \(i^*\) is continuous, \(S\) demi-continuous and \(\operatorname{Id}\) strongly continuous. This, however, is a consequence of Schauder's theorem, see for instance \cite{alt1992linear}, which states that a linear map $L\in\mathcal{L}(X,Y)$ between Banach spaces is compact if and only if $L^*\in\mathcal{L}(Y^*,X^*)$ is. Here, compact means that $L$ maps bounded sets to relatively compact ones. Let $X_c$ denote the completion of $(X,\lvert\cdot\rvert)$. Then, using the reflexivity of $(X,\norm{\cdot}_X)$ it is easily seen that $\operatorname{Id}\colon (X,\norm{\cdot}_X)\to X_c$ is compact. Finally, using that $(X,\lvert\cdot\rvert)^* = X_c^*$ the desired compactness of $\operatorname{Id}^*$ is established.
\end{proof}

The following theorem is the main result of this section. It shows that the convergence of the Deep Ritz method is uniform on bounded sets in the space $(X,\,\lvert\cdot\rvert\,)^*$. The proof of the uniformity follows an idea from \cite{cherednichenko2018norm}, where in a different setting a compactness result was used to amplify pointwise convergence to uniform convergence across bounded sets, compare to Theorem 4.1 and Corollary 4.2 in \cite{cherednichenko2018norm}.
\begin{theorem}[Uniform Convergence of the Deep Ritz Method]\label{absunires}
    Assume that we are in Setting \ref{setting:uniform_gamma_convergence} and let $\delta_n\searrow0$ be a sequence of real numbers. For $f\in X^*$ we set
    \[ S_n(f) \coloneqq \left\{ x\in X\;\big\lvert\; F^f_n(x) \leq \inf_{z\in X} F_n^f(z) + \delta_n\right\}, \]
    which is the approximate solution set corresponding to  $f$ and $\delta_n$. Furthermore, denote the unique minimizer of $F^f$ in $X_0$ by $x^f$ and fix $R>0$. Then we have
    \[ \sup \Big\{ \lvert x^f_n-x^f\rvert \;\big\lvert\; x_n^f\in S_n(f), \ \lVert f\rVert_{(X,\,\lvert\cdot\rvert\,)^*} \leq R \Big\} \to 0 \quad \text{for } n\to\infty. \]
\end{theorem}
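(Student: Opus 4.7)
The plan is to argue by contradiction, combining the compactness provided by Lemma~\ref{SchauderCompactness} with a $\Gamma$-convergence-style argument in which the right-hand side itself varies along the sequence. Suppose the conclusion fails. Then there exist $\varepsilon>0$, a sequence $(f_n)$ with $\|f_n\|_{(X,|\cdot|)^*}\le R$, and quasi-minimizers $x_n\in S_n(f_n)$ with $|x_n-x^{f_n}|\ge\varepsilon$. By reflexivity of $(X,|\cdot|)^*$ and Banach--Alaoglu, we may pass to a subsequence with $f_n\rightharpoonup f$ in $(X,|\cdot|)^*$ for some $f\in(X,|\cdot|)^*$. Lemma~\ref{SchauderCompactness} then yields $x^{f_n}=S(f_n)\to S(f)=x^f$ in $|\cdot|$. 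So if we can additionally show $x_n\to x^f$ in $|\cdot|$, the triangle inequality produces the desired contradiction.

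The key observation is that the Schauder-type argument in the proof of Lemma~\ref{SchauderCompactness} already shows that $\operatorname{Id}^*\colon(X,|\cdot|)^*\to X^*$ is compact; since compact operators between reflexive Banach spaces map weakly convergent sequences to norm convergent ones, we actually have $f_n\to f$ \emph{strongly} in $X^*$ (under the canonical identification). This strong convergence is what allows us to adapt the $\Gamma$-convergence machinery from the proof of Theorem~\ref{thm:abstract_gamma_convergence_thm} to a setting with varying right-hand sides.

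For the upper bound, pick a recovery sequence $y_n\in A_n$ for $x^f\in X_0$ furnished by Assumption~\ref{A1}, so $y_n\to x^f$ in $X$ and $\lambda_n\|\gamma(y_n)\|_B^p\to 0$. Using $f_n\to f$ in $X^*$ and continuity of $E$ (\ref{A2}),
\[
F_n^{f_n}(x_n)\le F_n^{f_n}(y_n)+\delta_n = E(y_n)+\lambda_n\|\gamma(y_n)\|_B^p-f_n(y_n)+\delta_n \longrightarrow E(x^f)-f(x^f)=F^f(x^f).
\]
Equi-coercivity (Assumption~\ref{A3}, invoked via the $f$-independent criterion of Lemma~\ref{lemma:coercivity_condition} together with the uniform bound on $\|f_n\|_{X^*}$) then forces $(x_n)$ to be bounded in $X$, and we extract a further subsequence with $x_n\rightharpoonup x^*$ in $X$. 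For the lower bound, weak lower semicontinuity of $\|\gamma(\cdot)\|_B^p$ combined with $\lambda_n\nearrow\infty$ and the uniform bound on $F_n^{f_n}(x_n)$ forces $\gamma(x^*)=0$, i.e.\ $x^*\in X_0$. Writing $f_n(x_n)=f(x_n)+(f_n-f)(x_n)$ and using $\|f_n-f\|_{X^*}\to 0$ together with boundedness of $(x_n)$ gives $f_n(x_n)\to f(x^*)$, and the weak lower semicontinuity of $E$ yields
\[
F^f(x^*)=E(x^*)-f(x^*)\le \liminf_{n\to\infty} F_n^{f_n}(x_n)\le F^f(x^f).
\]
Uniqueness of the minimizer in Assumption~\ref{A5} then implies $x^*=x^f$, and Assumption~\ref{A4} upgrades $x_n\rightharpoonup x^f$ to $x_n\to x^f$ in $|\cdot|$.

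The step I expect to be most delicate is the lower bound step with \emph{varying} $f_n$: the standard $\Gamma$-liminf inequality is stated for a fixed linear functional, so one has to carefully split $f_n=f+(f_n-f)$ and argue that the boundedness of $(x_n)$ in $\|\cdot\|_X$ suffices to kill the perturbation. This in turn depends on extracting \emph{uniform} equi-coercivity from the $\|f_n\|_{X^*}$ bound, and it is at precisely this point that the strong convergence $f_n\to f$ in $X^*$ (rather than merely weak convergence in $(X,|\cdot|)^*$) — afforded by Schauder's theorem as in Lemma~\ref{SchauderCompactness} — plays the decisive role.
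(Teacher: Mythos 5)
Your proof is correct and follows essentially the same approach as the paper: extract a weakly convergent subsequence of $(f_n)$ in $(X,\lvert\cdot\rvert)^*$, use Lemma~\ref{SchauderCompactness} to get $x^{f_n}\to x^f$ in $\lvert\cdot\rvert$, and a varying-right-hand-side $\Gamma$-convergence argument (which you re-derive inline, matching the paper's Proposition~\ref{theProposition}) to get $x_n\rightharpoonup x^f$ in $X$ and hence, via \ref{A4}, $x_n\to x^f$ in $\lvert\cdot\rvert$. The one noteworthy variant is in the $\Gamma$-liminf step with varying $f_n$: you invoke Schauder compactness of $\operatorname{Id}^*$ to upgrade $f_n\rightharpoonup f$ in $(X,\lvert\cdot\rvert)^*$ to norm convergence $f_n\to f$ in $X^*$ and then split $f_n(x_n)=f(x_n)+(f_n-f)(x_n)$, whereas the paper simply combines $f_n\rightharpoonup f$ in $(X,\lvert\cdot\rvert)^*$ with $x_n\to x$ in $\lvert\cdot\rvert$ (from \ref{A4}) and concludes $f_n(x_n)\to f(x)$ from weak-strong convergence of the duality pairing -- both routes are valid.
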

In the definition of this supremum, $f$ is measured in the norm of the space $(X,\,\lvert\cdot\rvert\,)^*$. This means that $f:(X,\,\lvert\cdot\rvert\,)\to\mathbb{R}$ is continuous which is a more restrictive requirement than the continuity with respect to $\lVert \cdot \rVert_X$. Also the computation of this norm takes place in the unit ball of $(X,\,\lvert\cdot\rvert\,)$, i.e.
\[
    \lVert f \rVert_{(X,\,\lvert\cdot\rvert\,)^*} = \sup_{\lvert x \rvert \leq 1}f(x).
\]
Before we prove Theorem \ref{absunires} we need a $\Gamma$-convergence result similar to Theorem \ref{thm:abstract_gamma_convergence_thm}. The only difference is, that now also the right-hand side may vary along the sequence.

\begin{proposition}\label{theProposition}
    Assume that we are in Setting \ref{setting:uniform_gamma_convergence}, however, we do not need Assumption \ref{A5} for this result. Let $f_n,f\in(X,\lvert\cdot\rvert)^*$ such that $f_n\rightharpoonup f$ in the weak topology of the reflexive space $(X,\lvert\cdot\rvert)^*$. Then the sequence $(F_n^{f_n})_{n\in\mathbb{N}}$ of functionals $\Gamma$-converges to $F^f$ in the weak topology of $(X,\norm{\cdot}_X)$. Furthermore, the sequence $(F_n^{f_n})_{n\in\mathbb{N}}$ is equi-coercive.
\end{proposition}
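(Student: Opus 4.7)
The plan is to imitate the proof of Theorem~\ref{thm:abstract_gamma_convergence_thm}, the only substantive new ingredient being the fact that $f_n(x_n) \to f(x)$ whenever $x_n \rightharpoonup x$ in $(X,\norm{\cdot}_X)$ and $f_n \rightharpoonup f$ in $(X,\lvert\cdot\rvert)^*$. To see this, note that Assumption~\ref{A4} upgrades the weak convergence of $(x_n)$ in $\norm{\cdot}_X$ to strong convergence $\lvert x_n - x \rvert \to 0$. Then the decomposition $f_n(x_n)-f(x) = f_n(x_n-x) + (f_n(x)-f(x))$ has first summand bounded by $\norm{f_n}_{(X,\lvert\cdot\rvert)^*}\cdot\lvert x_n-x\rvert$, which tends to zero by the uniform boundedness of $(f_n)$ in the dual space (itself a consequence of weak convergence via Banach--Steinhaus), while the second summand vanishes by definition of weak convergence evaluated at the fixed point $x$.

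Given this lemma, the liminf inequality follows just as in Theorem~\ref{thm:abstract_gamma_convergence_thm}: if $x\notin X_0$, then $\gamma(x_n)\rightharpoonup \gamma(x)\neq 0$ forces $\liminf_n\lambda_n\norm{\gamma(x_n)}_B^p = \infty$ via weak lower semi-continuity of $\norm{\cdot}_B^p$, and together with the lower bound on $E$ from Assumption~\ref{A2} and $f_n(x_n)\to f(x)$ this yields $\liminf_n F_n^{f_n}(x_n)=\infty = F^f(x)$; if $x\in X_0$, the weak lower semi-continuity of $E$ combined with $f_n(x_n)\to f(x)$ gives $\liminf_n F_n^{f_n}(x_n)\geq E(x)-f(x)=F^f(x)$. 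For the recovery sequence I would take the constant sequence $x_n = x$ when $x\notin X_0$; then $\gamma(x)\neq 0$ and $\lambda_n\to\infty$ force $F_n^{f_n}(x)\to\infty=F^f(x)$, irrespective of whether $x$ happens to lie in $A_n$. For $x\in X_0$ I would invoke Assumption~\ref{A1} to pick $x_n\in A_n$ with $x_n\to x$ in $\norm{\cdot}_X$ and $\lambda_n\norm{\gamma(x_n)}_B^p\to 0$, so that the norm-continuity of $E$ (Assumption~\ref{A2}) and the lemma above yield $F_n^{f_n}(x_n)\to E(x)-f(x)=F^f(x)$.

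The main obstacle is the equi-coercivity, because Assumption~\ref{A3} is phrased only for a fixed right-hand side. The remedy is that $f_n\rightharpoonup f$ in $(X,\lvert\cdot\rvert)^*$ implies norm-boundedness by some constant $M$, and since Assumption~\ref{A4} provides a continuous inclusion $(X,\lvert\cdot\rvert)^*\hookrightarrow X^*$, the sequence $(f_n)$ is likewise uniformly bounded in $X^*$. The usual coercivity argument, e.g.\ the estimate in the proof of Lemma~\ref{lemma:coercivity_condition}, then shows that any sublevel set $\{x\in X : F_n^{f_n}(x)\leq r\}$ lies in a fixed ball of $X$ independent of $n$: the linear terms $f_n(x)$ contribute at most $M\norm{x}_X$, which is absorbed by the coercive $\norm{x}_X^p$ term ($p>1$) arising from $E+\lambda_n\norm{\gamma(\cdot)}_B^p$ via Young's inequality, uniformly in $n$.
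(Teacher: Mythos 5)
Your proof follows the same route as the paper for the $\Gamma$-convergence part: the key new ingredient is exactly the observation that $x_n\rightharpoonup x$ in $(X,\norm{\cdot}_X)$ together with \ref{A4} gives $\lvert x_n-x\rvert\to 0$, and combined with the (Banach--Steinhaus) boundedness of $(f_n)$ in $(X,\lvert\cdot\rvert)^*$ this yields $f_n(x_n)\to f(x)$; the liminf inequality and recovery sequences are then handled exactly as in Theorem~\ref{thm:abstract_gamma_convergence_thm}. The paper states this more tersely ("$x_n\to x$ with respect to $\lvert\cdot\rvert$ which implies that $f_n(x_n)$ converges to $f(x)$"), but the reasoning is the one you spelled out.

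Where you diverge is the equi-coercivity, and here you are actually more careful than the paper. The paper dispatches this in one line, "the equi-coercivity was already assumed in \ref{A3} so it does not need to be shown," but \ref{A3} only asserts equi-coercivity of $(F^f_n)_n$ for the \emph{fixed} $f$ from Setting~\ref{setting:gamma_convergence}, not for the varying sequence $(F^{f_n}_n)_n$; this does not follow formally without some extra structure, since equi-coercivity (bounded sublevel sets) alone does not give a growth rate that can absorb the perturbation $(f-f_n)(x)$. You correctly flag this as the "main obstacle" and repair it by noting that $(f_n)$ is uniformly bounded in $X^*$ (via the continuous embedding $(X,\lvert\cdot\rvert)^*\hookrightarrow X^*$ guaranteed by \ref{A4}) and then running the Lemma~\ref{lemma:coercivity_condition}-style estimate. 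Be aware, though, that this step implicitly invokes the \emph{hypothesis} of Lemma~\ref{lemma:coercivity_condition} (a quantitative lower bound $E(x)+\norm{\gamma(x)}_B^p\gtrsim\norm{x}_X^p-\norm{x}_X-1$) rather than the literal assumption \ref{A3}; this stronger structural coercivity is what holds in the paper's examples and is presumably the intended reading, but strictly at the abstract level it is an additional assumption beyond \ref{A3}. So: same $\Gamma$-convergence proof, a more honest but also more demanding treatment of equi-coercivity.
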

\begin{proof}
    The proof is almost identical to the one of Theorem \ref{thm:abstract_gamma_convergence_thm} but since it is brief, we include it for the reader's convenience. We begin with the limes inferior inequality.
    Let $x_n\rightharpoonup x$ in $X$ and $x\notin X_0$. Then $x_n\to x$ with respect to $\lvert\cdot\rvert$ which implies that $f_n(x_n)$ converges to $f(x)$. Using that $ \gamma(x_n)\rightharpoonup\gamma(x) $ in $B$ combined with the weak lower semicontinuity of $\rVert\cdot\lVert^p_B $ we get
    \[
    \liminf_{n\to\infty}F_n^{f_n}(x_n) \geq \inf_{x\in X} E(x) + \liminf_{n\to\infty}\lambda_n\lVert\gamma(x_n)\rVert^p_B - \lim_{n\to\infty}f_n(x_n) = \infty.
    \]
    Now let $x\in X_0$. Then by the weak lower semicontinuity of $E$ we find
    \[
    \liminf_{n\to\infty}F_n^{f_n}(x_n) \geq \liminf_{n\to\infty}E(x_n) - f(x) \geq E(x) -f(x) = F^f(x).
    \]
    Now let us have a look at the construction of the recovery sequence. For $x\notin X_0$ we can choose the constant sequence and estimate
    \[
    F^{f_n}_n(x) \geq \inf_{x\in X}E(x) + \lambda_n\lVert\gamma(x)\rVert_B^p - \lVert f_n\rVert_{(X,\lvert\cdot\rvert)'} \cdot \lvert x\rvert.
    \]
    As $\lVert f_n\rVert_{(X,\lvert\cdot\rvert)^*}$ is bounded we find $F_n^{f_n}(x)\to \infty = F^f(x)$. If $x\in X_0$ we approximate it with a sequence $(x_n)\subseteq X$ according to Assumption \ref{A1}, such that $x_n\in A_n$ and $x_n\to x$ in $\lVert\cdot\rVert_X$ and $\lambda_n\lVert \gamma(x_n) \rVert_B^p \to 0$. It follows that
    \[
    F_n^{f_n}(x_n) = E(x_n) + \lambda_n\lVert x_n \rVert_B^p -f_n(x_n) \to E(x)-f(x) = F^f(x).
    \]
    The equi-coercivity was already assumed in \ref{A3} so it does not need to be shown.
\end{proof}
\begin{proof}[Proof of Theorem \ref{absunires}]
    We can choose $(f_n)\subseteq(X,\lvert\cdot\rvert)^*$ and $\lVert f_n\rVert_{(X,\lvert\cdot\rvert)^*}\leq R$ and $x_n^{f_n}\in S_n(f_n)$ such that
    \[
        \sup_{\begin{subarray}{c} \lVert f\rVert_{(X,\,\lvert\cdot\rvert\,)^*}\leq R \\ x^f_n\in S_n(f) \end{subarray}}\big\lvert x^f_n-x^f\big\rvert \leq \big\lvert x_n^{f_n}-x^{f_n}\big\rvert + \frac{1}{n}.
    \]
    Now it suffices to show that $\lvert x_n^{f_n} - x^{f_n}\rvert$ converges to zero. Since $(f_n)_{n\in\mathbb{N}}$ is bounded in $(X,\lvert\cdot\rvert)^*$ and this space is reflexive we can without loss of generality assume that $f_n\rightharpoonup f$ in $(X,\lvert\cdot\rvert)^*$. This implies by Lemma \ref{SchauderCompactness} that $x^{f_n}\to x^f$ in $(X,\lvert\cdot\rvert)$. The $\Gamma$-convergence result of the previous proposition yields $x_n^{f_n}\rightharpoonup x^f$ in $X$ and hence $x_n^{f_n}\to x^f$ with respect to $\lvert\cdot\rvert$ which concludes the proof.
\end{proof}

\section{Examples}\label{section:examples}
We discuss different concrete examples that allow the application of our abstract results and focus on non-linear problems. In particular, we consider a phase field model illustrating the basic $\Gamma$-convergence result of Section \ref{section:plain_gamma_convergence} and the $p$-Laplacian as an example for the uniform results of Section \ref{section:uniform_gamma_covergence}.
\subsection{A Phase Field Model}
Let $\varepsilon > 0$ be fixed, $\Omega \subset \mathbb{R}^d$ a bounded Lipschitz domain and consider the following energy
\begin{equation*}
    E\colon H^1(\Omega) \cap L^4(\Omega) \to [0,\infty), \quad E(u) = \frac{\varepsilon}{2}\int_\Omega |\nabla u|^2\mathrm dx + \frac{1}{\varepsilon}\int_\Omega W(u)\mathrm dx,
\end{equation*}
where $W\colon\mathbb{R}\to \mathbb{R}$ is a non-linear function, given by
\begin{equation*}
    W(u) = \frac14u^2(u-1)^2 = \frac14 u^4 - \frac12 u^3 + \frac14 u^2.
\end{equation*}
The functional $E$ constitutes a way to approximately describe phase separation and the parameter $\varepsilon$ encodes the length-scale of the phase transition, see \cite{cahn1958free}. We describe now how the Setting \ref{setting:gamma_convergence} is applicable to fully connected ReLU neural network ansatz functions. For the Banach spaces in Setting \ref{setting:gamma_convergence} we choose
\begin{gather*}
    X = H^1(\Omega)\cap L^4(\Omega), \quad B = L^2(\partial\Omega), \quad \norm{\cdot}_X = \norm{\cdot}_{H^1(\Omega)} + \norm{\cdot}_{L^4(\Omega)}, \quad \norm{\cdot}_B =\norm{\cdot}_{L^2(\partial\Omega)}.
\end{gather*}
These spaces are clearly reflexive and the trace operator meets the requirements of continuity and linearity and is our choice for $\gamma$, together with $p=2$. For the sets $(A_n)_{n\in\mathbb{N}}$ we use the ReLU activation function and define
\[
    A_n\coloneqq \left\{ u_\theta \mid \theta \in \Theta_n \right\}\subset H^1(\Omega)\cap L^4(\Omega),
\]
where $\Theta_n$ encodes that we use scalar valued neural networks with input dimension $d$ and depth $\lceil \log_2(d+1)\rceil +1$. The width of all other layers is set to $n$. Then it holds $A_n\subset A_{n+1}$ for all $n\in\mathbb{N}$ and Theorem~\ref{UniversalApproximation} shows that Assumption \ref{A1} is satisfied.

The continuity of $E$ with respect to $\norm{\cdot}_X$ is clear, hence we turn to the weak lower semi-continuity. To this end, we write $E$ in the following form
\begin{align*}
    E(u) = \underbrace{\frac{\varepsilon}{2}\int_\Omega |\nabla u|^2\mathrm dx + \frac{1}{4\varepsilon}\int_\Omega u^4\mathrm dx}_{\eqqcolon E_1(u)} + \underbrace{\frac{1}{\varepsilon} \int_\Omega \frac14 u^2 - \frac12 u^3\mathrm dx }_{\eqqcolon E_2(u)}
\end{align*}
and treat $E_1$ and $E_2$ separately. The term $E_1$ is continuous with respect to $\norm{\cdot}_X$ and convex, hence weakly lower semi-continuous. To treat $E_2$, note that we have the compact embedding
\begin{equation*}
    H^1(\Omega)\cap L^4(\Omega) \hookrightarrow\hookrightarrow L^3(\Omega).
\end{equation*}
This implies that a sequence that converges weakly in $H^1(\Omega)\cap L^4(\Omega)$ converges strongly in $L^3(\Omega)$ and consequently shows that the term $E_2$ is continuous with respect to weak convergence in $X$. Finally, for fixed $f\in X^*$, we need to show that the sequence $(F^f_n)_{n\in\mathbb{N}}$ is equi-coercive with respect to $\norm{\cdot}_X$. To this end, it suffices to show that the sequence
\begin{equation*}
    G^f_n\colon X\to\mathbb{R}, \quad G^f_n(u) = \frac{\varepsilon}{2}\int_\Omega |\nabla u|^2\mathrm dx + \frac{1}{\varepsilon}\int_\Omega W(u)\mathrm dx + \lambda_n\int_{\partial\Omega}u^2\mathrm ds - f(u)
\end{equation*}
is equi-coercive as it holds $F^f_n \geq G^f_n$. Let $r\in\mathbb{R}$ be fixed and consider all $u\in X$ with $G^f_n(u) \geq r$. Then, without losing generality, we may assume $\lambda_n \geq 1$ and estimate
\begin{align*}
    r \geq G_n^f(u)
    &\geq
    \frac{\varepsilon}{2}\int_\Omega |\nabla u|^2\mathrm dx + \int_{\partial\Omega}u^2\mathrm ds + \frac{1}{\varepsilon}\int_\Omega W(u)\mathrm dx - f(u)
    \\
    &\geq
    c\norm{u}^2_{H^1(\Omega)} - \lVert f \rVert_{X^*}\left(\norm{u}_{H^1(\Omega)} + \lVert u \rVert_{L^4(\Omega)}\right) + \frac{1}{4\varepsilon}\norm{u}^4_{L^4(\Omega)} - \frac{1}{3\varepsilon}\norm{u}^3_{L^3(\Omega)}
    \\
    &\geq
    c\norm{u}^2_{H^1(\Omega)} - \lVert f \rVert_{X^*}\norm{u}_{H^1(\Omega)} + \frac{1}{4\varepsilon}\norm{u}^4_{L^4(\Omega)} - \frac{|\Omega|^{1/4}}{3\varepsilon}\norm{u}^{3/4}_{L^4(\Omega)} - \lVert f \rVert_{X^*}\lVert u \rVert_{L^4(\Omega)},
\end{align*}
where we used Friedrich's inequality, see Proposition~\ref{poinclemma} and the estimate
\begin{equation*}
    \norm{u}_{L^3(\Omega)}^3 \leq |\Omega|^{1/4}\norm{u}_{L^4(\Omega)}^{3/4}
\end{equation*}
due to H\"older's inequality. This clearly implies a bound on the set
\begin{equation*}
    \bigcup_{n\in\mathbb{N}}\left\{ u\in H^1(\Omega)\cap L^4(\Omega) \mid G_n^f(u) \leq r \right\}
\end{equation*}
and hence $(F^f_n)_{n\in\mathbb N}$ is equi-coercive.

\begin{remark}[Stability under Compact Perturbations]
    With a similar -- even simpler -- approach we may also show that energies of the form
    \begin{equation*}
        \hat E(u) = E(u) + F(u)
    \end{equation*}
    fall in the Setting \ref{setting:gamma_convergence} provided $E$ does and $F$ is bounded from below and continuous with respect to weak convergence in $X$. Note also, that in the space dimension $d = 2$ this includes the above example, however, the slightly more involved proof presented here works independently of the space dimension $d$.
\end{remark}

\begin{figure}
    \centering
    \begin{subfigure}{0.415\linewidth}
    \includegraphics[width=\linewidth]{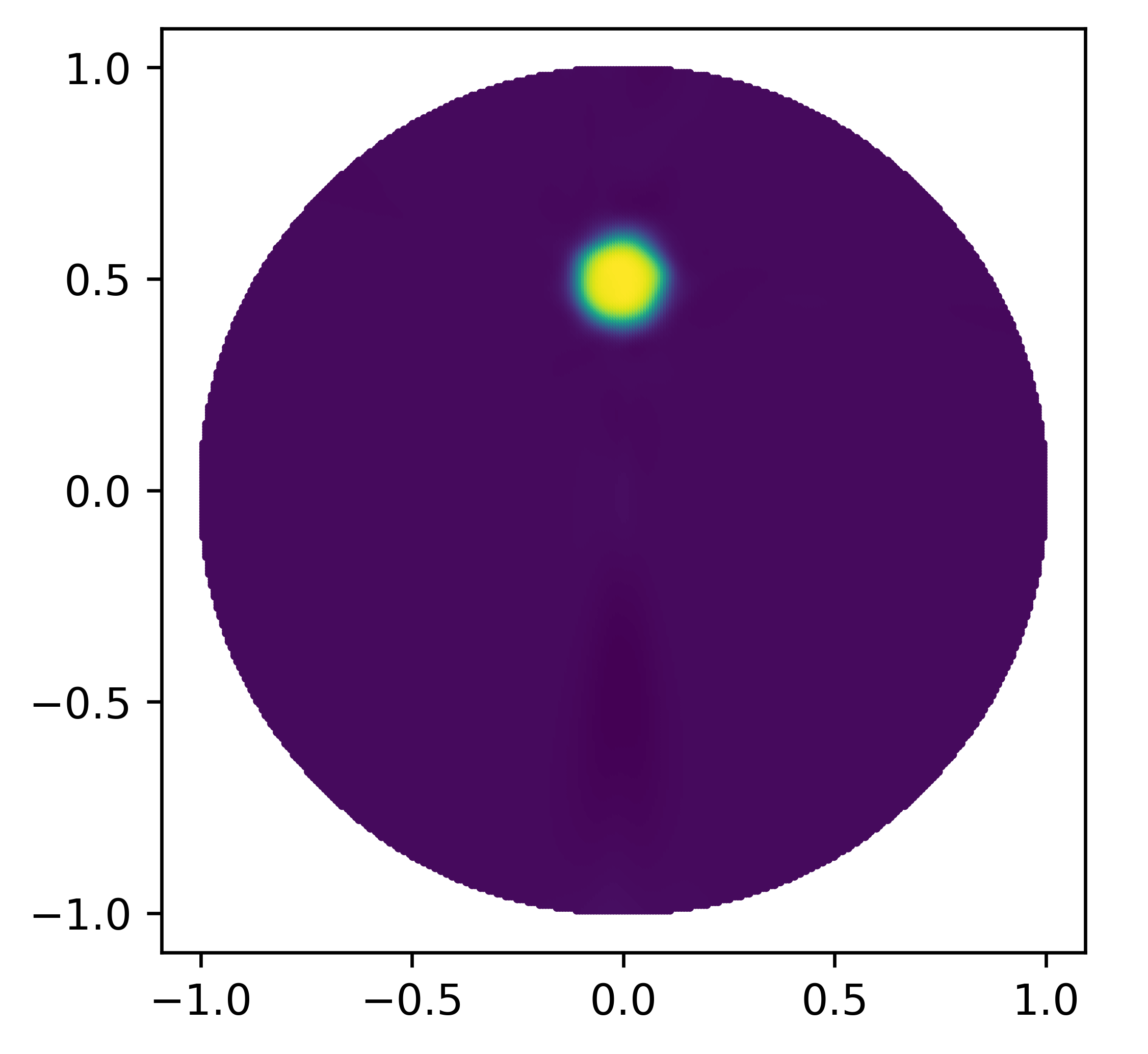}
    \end{subfigure}
    \begin{subfigure}{0.48\linewidth}
    \includegraphics[width=\linewidth]{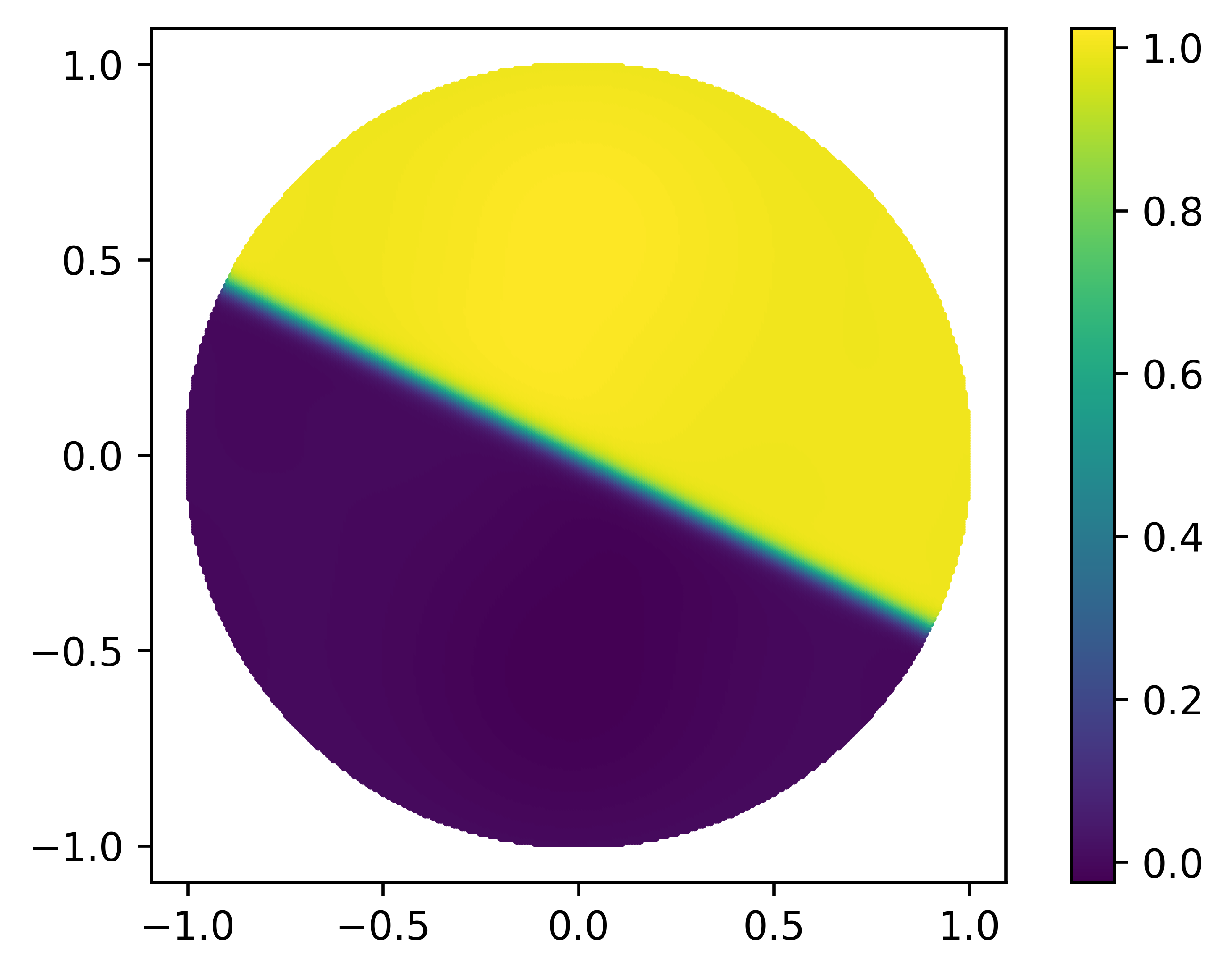}
    \end{subfigure}
    \caption{Exemplary numerical realization of the Deep Ritz Method for a Cahn-Hilliard functional with right-hand sides given through $f=\chi_{B_{r}(0,-1/2)} - \chi_{B_{r}(0,1/2)}$ with $r=0.1$ for the left plot and $r=0.4$ for the right plot. The value of $\varepsilon$ is set to $0.01$. We used zero Neumann boundary conditions and fully connected feed-forward networks with three hidden layers of width 16 and $\tanh$ activation. The number of trainable parameters is $609$.}
    \label{fig:modica_mortula}
\end{figure}

\begin{remark}
Figure~\ref{fig:modica_mortula} shows two exemplary numerical realizations of the Deep Ritz Method with right-hand sides
\begin{equation*}
    f_i = \chi_{B_{r_i}(0,-1/2)} - \chi_{B_{r_i}(0,1/2)}
\end{equation*}
for $r_1=0.1$ and $r_2=0.4$ corresponding to the left and right picture. Note that in the case of $f_1$, a phase transition around the ball $B_{r_1}(0,1/2)$ is energetically more favorable than the configuration in the right figure, where the radius $r_2$ is much larger.
\end{remark}

\subsection{\texorpdfstring{The $p$-Laplacian}{The p-Laplacian}}
\begin{figure}
    \centering
    \begin{subfigure}{0.48\linewidth}
    \includegraphics[width=\linewidth]{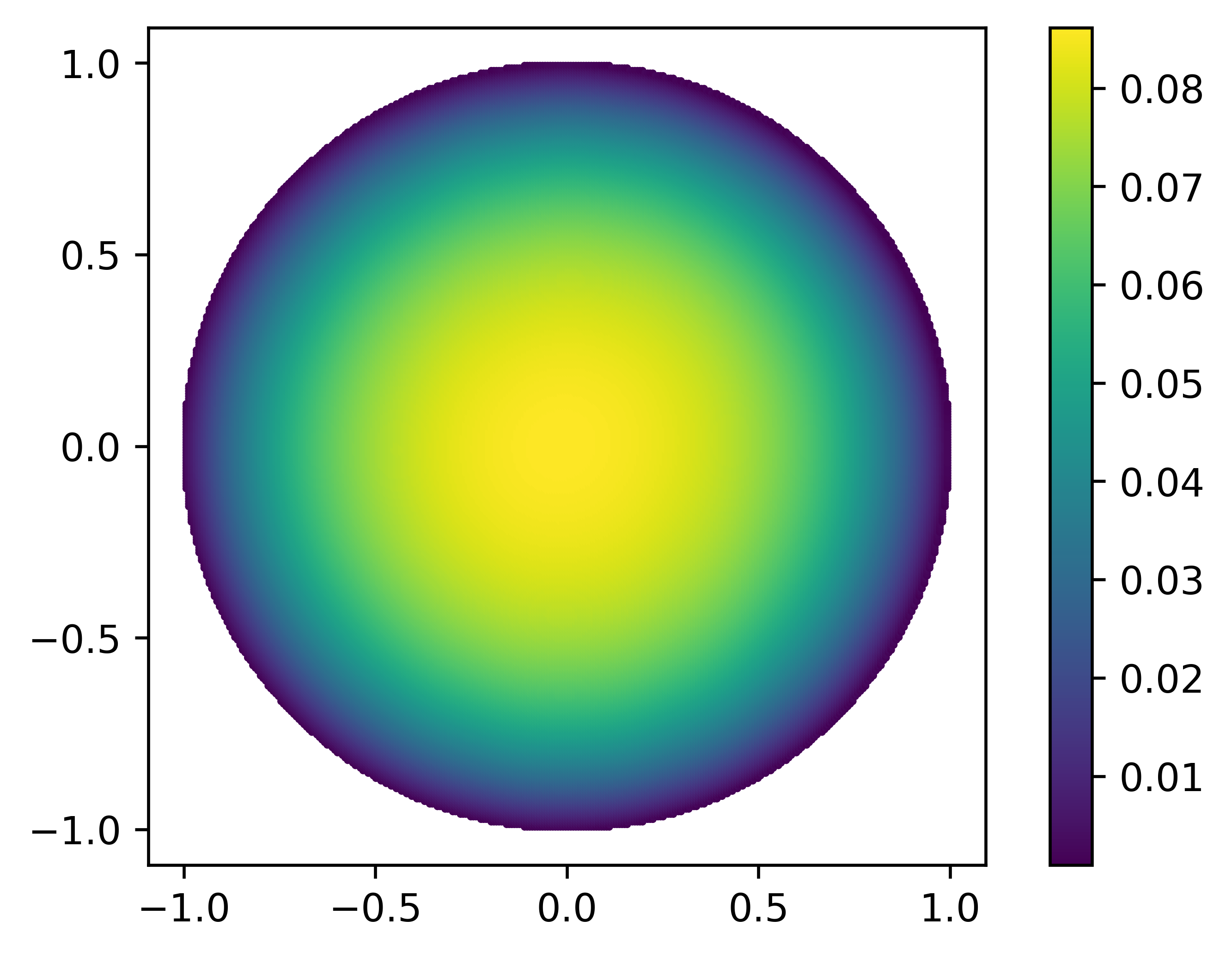}
    \end{subfigure}
    \begin{subfigure}{0.48\linewidth}
    \includegraphics[width=\linewidth]{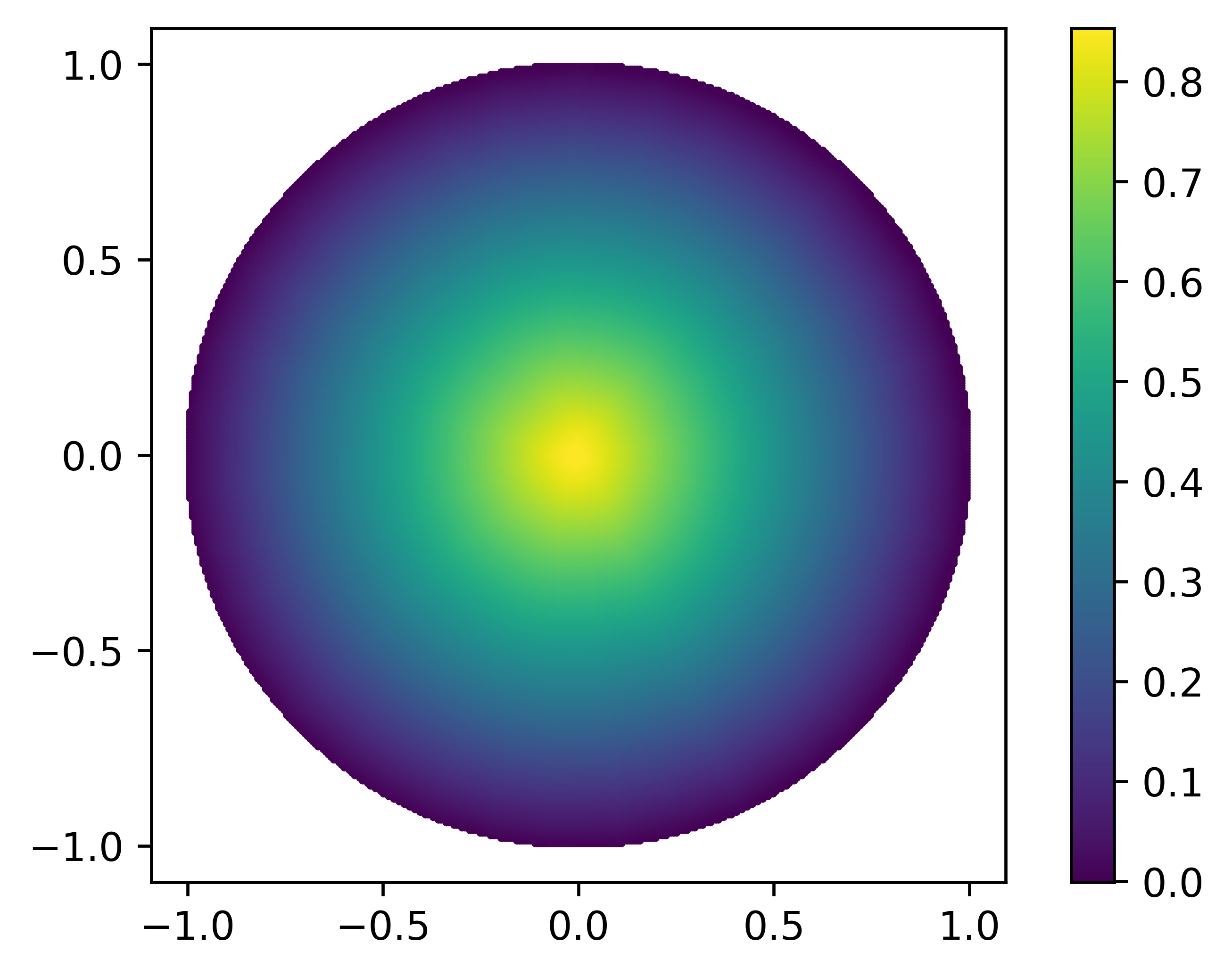}
    \end{subfigure}
    \caption{Exemplary numerical realization of the Deep Ritz Method for the $p$-Laplacian with right-hand $f=1$ and $p=1.5$ in the left plot and $p=10$ in the right plot. Zero Dirichlet boundary conditions are enforced through a penalty parameter $\lambda=250$. We used fully connected feed-forward networks with three hidden layers of width 16 and GELU activation for the left plot and ReLU activation for the right plot. The number of trainable parameters is $609$. Note the difference in the scaling of the axis in the two plots.}
    \label{fig:p_laplace}
\end{figure}
As an example for the uniform convergence of the Deep Ritz method we discuss the $p$-Laplacian. To this end, consider the $p$-Dirichlet energy for $p\in (1,\infty)$ given by
\begin{align*}
    E\colon W^{1,p}(\Omega)\to\mathbb{R}
    ,\quad
    u \mapsto
    \frac1p\int_\Omega|\nabla u|^p\,\mathrm dx.
\end{align*}
Note that for $p\neq 2$ the associated Euler-Lagrange equation – the $p$-Laplace equation –  is nonlinear. In strong formulation it is given by
\begin{align*}
    -\operatorname{div}(\lvert\nabla u\rvert^{p-2}\nabla u) &= f \quad\text{in }\Omega
    \\
    u &= 0 \quad\text{on }\partial\Omega,
\end{align*}
see for example \cite{struwe1990variational} or \cite{ruzicka2006nichtlineare}. Choosing the ReLU activation function, the abstract setting is applicable as we will describe now. For the Banach spaces we choose
\[ X = W^{1,p}(\Omega), \quad B = L^p(\partial\Omega), \quad \lvert u\rvert = \norm{u}_{L^p(\Omega)}\]
where the norms $\norm{\cdot}_X$ and $\norm{\cdot}_B$ are chosen to be the natural ones. Clearly, $W^{1,p}(\Omega)$ endowed with the norm $\norm{\cdot}_{W^{1,p}(\Omega)}$ is reflexive by our assumption $p\in(1,\infty)$. Note that it holds
    \[ \left(W^{1,p}(\Omega),\norm{\cdot}_{L^p(\Omega)}\right)^* = L^p(\Omega)^* \cong L^{p^\prime}(\Omega), \]
which is also reflexive.
We set $\gamma = \operatorname{tr}$, i.e.
\begin{align*}
    \operatorname{tr}\colon W^{1,p}(\Omega) & \to L^p(\partial\Omega)\quad\text{with}\quad u\mapsto u|_{\partial\Omega}
\end{align*}
We use the same ansatz sets $(A_n)_{n\in\mathbb{N}}$ as in the previous example, hence Assumption \ref{A1} holds.
Rellich's theorem provides the complete continuity of the embedding
    \[ \left( W^{1,p}(\Omega),\norm{\cdot}_{W^{1,p}(\Omega)}\right) \to \left(W^{1,p}(\Omega),\norm{\cdot}_{L^{p}(\Omega)}\right) \]
which shows Assumption \ref{A4}. As for Assumption \ref{A3}, Friedrich's inequality provides the assumptions of Lemma \ref{lemma:coercivity_condition}. Furthermore, $E$ is continuous with respect to $\norm{\cdot}_{W^{1,p}(\Omega)}$ and convex,
hence also weakly lower semi-continuous. By Poincar\'e's and Young's inequality we find for all $u\in W_0^{1,p}(\Omega)$ that
\begin{align*}
    F^f(u) &= \frac1p \int_\Omega \lvert\nabla u\rvert^p\mathrm dx - f(u)
    \\&\geq C\norm{u}^p_{W^{1,p}(\Omega)} - \lVert f\rVert_{W^{1,p}(\Omega)'}\norm{u}_{W^{1,p}(\Omega)}
    \\&\geq C\norm{u}^p_{W^{1,p}(\Omega)} - \tilde{C}.
\end{align*}
Hence, a minimizing sequence in $W^{1,p}_0(\Omega)$ for $F^f$ is bounded and as $F^f$ is strictly convex on $W^{1,p}_0(\Omega)$ it possesses a unique minimizer. Finally, to provide the demi-continuity we must consider the operator $S\colon W_0^{1,p}(\Omega)^*\to W_0^{1,p}(\Omega)$ mapping $f$ to the unique minimizer $u_f$ of $E - f$ on $W^{1,p}_0(\Omega)$. By the Euler-Lagrange formalism, \(u\) minimizes \(F^f\) if and only if
\[ \int_\Omega \lvert\nabla u\rvert^{p-2}\nabla u \cdot \nabla v \mathrm dx  = f(v) \quad\text{for all } v \in W_0^{1,p}(\Omega).\]
Hence, the solution map $S$ is precisely the inverse of the mapping
\[ W_0^{1,p}(\Omega)\to W^{1,p}_0(\Omega)^*, \quad u\mapsto \left( v\mapsto\int_\Omega \lvert\nabla u\rvert^{p-2}\nabla u \cdot \nabla v \mathrm dx \right) \]
and this map is demi-continuous, see for example \cite{ruzicka2006nichtlineare}.
\begin{remark}
    Figure~\ref{fig:p_laplace} shows two numerical realizations of the Deep Ritz Method for the $p$-Laplacian with right-hand side $f\equiv 1$ and $p_1=3/2$ in the left picture and $p_2=10$ in the right picture. The penalization value is set to $\lambda = 250$ in both simulations to approximately enforce zero boundary values. Note that the exact solution to the homogeneous $p$-Laplace problem on the disk with $f\equiv 1$ is given by
    \begin{equation*}
        u_p(x) = C\cdot\left( 1 - |x|^{\frac{p}{p-1}} \right)
    \end{equation*}
    for a suitable constant $C$ that depends on the spatial dimension and the value of $p$. We see that the solution $u_p$ converges pointwise to zero for $p\searrow 0$ and for $p\nearrow\infty$ the function $u_p$ tends to $x\mapsto C(1-|x|)$. This asymptotic behavior is clearly visible in our simulations.
\end{remark}

\bibliographystyle{apalike}
\bibliography{references}

\appendix

\section{Universal approximation with zero boundary values}

Here we prove the universal approximation result which we stated as Theorem \ref{UniversalApproximation} in the main text. Our proof uses that every continuous, piecewise linear function can be represented by a neural network with ReLU activation function and then shows how to approximate Sobolev functions with zero boundary conditions by such functions. The precise definition of a piecwise linear function is the following.

\begin{definition}[Continuous piecewise linear function]\label{PiecewiseLinear}
We say a function \(f \colon\mathbb R^d \to\mathbb R\) is  \emph{continuous piecewise linear} or shorter \emph{piecewise linear} if there exists a finite set of closed polyhedra whose union is \(\mathbb R^d\), and \(f\) is affine linear over each polyhedron. Note every piecewise linear functions is continuous by definition since the polyhedra are closed and cover the whole space \(\mathbb R^d\), and affine functions are continuous.
\end{definition}
\begin{theorem}[Universal expression]\label{ArorasTheorem}
    Every ReLU neural network function $u_\theta:\mathbb{R}^d\to\mathbb{R}$
    is a piecewise linear function. Conversely, every piecewise linear function \(f\colon\mathbb R^d\to\mathbb R\) can be expressed by a ReLU network of depth at most \(\lceil \log_2(d+1)\rceil +1\).
\end{theorem}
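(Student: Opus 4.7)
The forward direction is a routine induction on the number of layers. Each layer is the composition of an affine map (trivially piecewise linear) with a coordinate-wise application of ReLU, and the composition of two piecewise linear functions is again piecewise linear because one can refine the polyhedral decompositions: pulling back the polyhedral decomposition of the codomain through an affine map produces a polyhedral decomposition of the domain on each piece of which the composition is affine, and intersecting two such decompositions again yields a polyhedral decomposition.

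For the converse the plan is to follow the argument of Arora--Basu--Mianjy--Mukherjee. The key gadget is that binary $\max$ and $\min$ can be implemented with a single ReLU layer via the identities
\begin{equation*}
  \max(x,y) = y + \operatorname{ReLU}(x-y), \qquad \min(x,y) = y - \operatorname{ReLU}(y-x).
\end{equation*}
By arranging $n$ inputs in a balanced binary tournament, these identities yield a ReLU network of depth $\lceil \log_2 n\rceil$ that outputs the $\max$ (respectively $\min$) of $n$ prescribed affine functions of $x\in\mathbb{R}^d$, where the affine functions are computed in an initial hidden layer. This accounts for the "$+1$" in the depth bound.

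The main step, and the main obstacle, is the purely geometric representation lemma: every continuous piecewise linear function $f\colon\mathbb R^d\to\mathbb R$ admits a representation
\begin{equation*}
  f(x) \;=\; \max_{i\in I}\,\min_{j\in J_i}\, \ell_{i,j}(x)
\end{equation*}
with affine $\ell_{i,j}$ and with $|I|,\,\max_i |J_i|\le d+1$. This is a theorem of Wang--Sun and is proved by first writing $f$ as a difference of two convex piecewise linear functions (each a pointwise maximum of finitely many affine functions) and then invoking a Carathéodory-type argument on the polytope of active gradients to reduce the inner cardinalities to $d+1$. I expect this lemma to be the difficult piece; the dimension does not enter the network side of the construction at all.

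Once the representation lemma is in hand, assembling the network is mechanical: apply the $\min$-tree of depth $\lceil\log_2(d+1)\rceil$ to each inner block $\min_{j\in J_i}\ell_{i,j}$ in parallel, then apply a single $\max$-tree of depth $\lceil\log_2(d+1)\rceil$ on top. Using that $\max$ and $\min$ over $d+1$ arguments both cost the same depth, a careful accounting—combining the initial affine layer that computes all $\ell_{i,j}$ simultaneously with the first ReLU layer of the tournament—collapses the total depth to $\lceil \log_2(d+1)\rceil +1$, matching the claimed bound.
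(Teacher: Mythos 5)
The paper does not prove this theorem at all---it states it and then writes ``For the proof of this statement we refer to \cite{arora2016understanding}.'' So there is no internal proof to compare against; what you have attempted is a reconstruction of the cited Arora--Basu--Mianjy--Mukherjee argument. Your forward direction is fine, and the $\max/\min$ gadgets are the right building blocks. But the key representation lemma is misremembered, and the error is load-bearing.

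The result you want from Wang--Sun is \emph{not} a $\max$-$\min$ representation with both cardinalities bounded by $d+1$. It is a \emph{signed sum} of maxes: there exist affine functions $\ell_1,\dots,\ell_k$, signs $s_j\in\{\pm1\}$, and index sets $S_j$ with $|S_j|\le d+1$ such that
\[
f(x)=\sum_{j=1}^{p} s_j\,\max_{i\in S_j}\ell_i(x).
\]
The number $p$ of outer summands is \emph{not} bounded by $d+1$, and it does not need to be: a linear combination is absorbed into the final affine output layer and contributes no depth. That is exactly why the total depth is $\lceil\log_2(d+1)\rceil+1$ --- one ``$+1$'' for the initial affine layer, $\lceil\log_2(d+1)\rceil$ rounds of binary tournament for each inner $\max$ (computed in parallel across $j$), and then a depth-free affine sum at the output.

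Your version, $f=\max_{i\in I}\min_{j\in J_i}\ell_{i,j}$ with $|I|,|J_i|\le d+1$, fails on two counts. First, the cardinality bound on the outer $\max$ is false in general: already for $d=1$ a four-piece ``W''-shaped function is not a $\max$ of two tents $\min(\ell_1,\ell_2)$, $\min(\ell_3,\ell_4)$, so $|I|\le 2$ cannot hold. (There \emph{are} $\max$-$\min$ representation theorems, e.g.\ Ovchinnikov's lattice representation, but they bound only the inner cardinality.) Second, even granting such a representation, the depth accounting cannot ``collapse'' as you claim: stacking a $\min$-tree of depth $\lceil\log_2(d+1)\rceil$ beneath a $\max$-tree of depth $\lceil\log_2(d+1)\rceil$ costs roughly $2\lceil\log_2(d+1)\rceil$ hidden layers, not $\lceil\log_2(d+1)\rceil+1$; tournaments compose additively in depth, and only the width is parallelizable. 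The fact that the outer combination is linear rather than a $\max$ is precisely the point that makes the Arora et al.\ depth bound work, and it is the piece your sketch is missing.

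A smaller remark: implementing $\max(x,y)=y+\operatorname{ReLU}(x-y)$ as a genuine ReLU layer requires carrying $y$ through a rectified layer, typically via $y=\operatorname{ReLU}(y)-\operatorname{ReLU}(-y)$. This affects width, not depth, so it does not change the conclusion, but it is worth stating if you want the construction to literally conform to the network definition in Section~\ref{sec:neural_networks}.
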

For the proof of this statement we refer to \cite{arora2016understanding}.
We turn now to the approximation capabilities of piecewise linear functions.
\begin{lemma}\label{ApproxLemma}
    Let $\varphi\in C_c^\infty(\mathbb{R}^d)$ be a smooth function with compact support. Then for every $\varepsilon>0$ there is a piecewise linear function $s_\varepsilon$ such that for all $p\in[1,\infty]$ it holds
    \begin{align*}
        \lVert s_\varepsilon-\varphi\rVert_{W^{1,p}(\mathbb R^d)}\le\varepsilon
        \quad \text{and}\quad
        \operatorname{supp}(s_\varepsilon)
        \subseteq
        \operatorname{supp}(\varphi) + B_{\varepsilon}(0).
    \end{align*}
    Here, we set $B_{\varepsilon}(0)$ to be the $\varepsilon$-ball around zero, i.e. $B_{\varepsilon}(0)=\{ x\in\mathbb{R}\mid \lvert x\rvert < \varepsilon \}$.
    \begin{proof}
        In the following we will denote by $\norm{\cdot}_\infty$ the uniform norm on $\mathbb R^d$. To show the assertion choose a triangulation $\mathcal{T}$ of $\mathbb{R}^d$ of width $\delta=\delta(\varepsilon) >0$, consisting of rotations and translations of one non-degenerate simplex $K$. We choose $s_\varepsilon$ to agree with $\varphi$ on all vertices of elements in $\mathcal{T}$. Since $\varphi$ is compactly supported it is uniformly continuous and hence it is clear that $\lVert \varphi-s_\varepsilon\rVert_\infty <\varepsilon$ if $\delta$ is chosen small enough.

        To show convergence of the gradients we show that also $\lVert \nabla \varphi - \nabla s_\varepsilon\rVert_{\infty}<\varepsilon$ which will be shown on one element $K\in\mathcal{T}$ and as the estimate is independent of $K$ is understood to hold on all of $\mathbb{R}^d$.
        So let $K\in\mathcal{T}$ be given and denote its vertices by $x_1,\dots,x_{d+1}$. We set $v_i = x_{i+1}-x_1$, $i=1,\dots,d$ to be the vectors spanning $K$. By the one dimensional mean value theorem we find $\xi_i$ on the line segment joining $x_1$ and $x_i$ such that
        \[
            \partial_{v_i}s_\varepsilon(v_1) = \partial_{v_i}\varphi(\xi_i).
        \]
        Note that $\partial_{v_i}s_\varepsilon$ is constant on all of $K$ where it is defined. Now for arbitrary $x\in K$ we compute with setting $w=\sum_{i=1}^d\alpha_iv_i$ for $w\in\mathbb{R}^d$ with $|w|\leq 1$. Note that the $\alpha_i$ are bounded uniformly in $w$, where we use that all elements are the same up to rotations and translations.
        \begin{align*}
            \lvert \nabla\varphi(x) - \nabla s_\varepsilon(x)\rvert
            &=
            \sup_{\lvert w\rvert\leq1}\lvert \nabla\varphi(x)w-\nabla s_\varepsilon(x)w\rvert
            \\
            &\leq
            \sup_{\lvert w\rvert\leq 1} \sum_{i=1}^d \lvert\alpha_i\rvert\cdot
            \underbrace{
            \lvert \partial_{v_i}\varphi(x) - \partial_{v_i}s_\varepsilon(x) \rvert
            }_{=(*)}
        \end{align*}
        where again $(*)$ is uniformly small due to the uniform continuity of $\nabla\varphi$. Noting that the $W^{1,\infty}$-case implies the claim for all $p\in[1,\infty)$ finishes the proof.
    \end{proof}
\end{lemma}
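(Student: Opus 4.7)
The plan is to take $s_\varepsilon$ to be the continuous piecewise linear interpolant of $\varphi$ on a sufficiently fine shape-regular triangulation of $\mathbb{R}^d$. Concretely, I would fix once and for all a non-degenerate reference simplex $K_0 \subset \mathbb{R}^d$ and, for a parameter $\delta = \delta(\varepsilon) > 0$ to be chosen later, construct a simplicial tiling $\mathcal{T}_\delta$ of $\mathbb{R}^d$ whose elements are all isometric copies of $\delta K_0$ (for instance via Kuhn's triangulation of the cubes in the lattice $\delta \mathbb{Z}^d$). Then define $s_\varepsilon$ as the unique continuous function that is affine on every element of $\mathcal{T}_\delta$ and satisfies $s_\varepsilon(x_*) = \varphi(x_*)$ at every vertex $x_*$; by Theorem \ref{ArorasTheorem} this $s_\varepsilon$ is realizable by a ReLU network.

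First I would control $\lVert s_\varepsilon - \varphi \rVert_\infty$: on each simplex $K$ the value $s_\varepsilon(x)$ is a convex combination of the values of $\varphi$ at the vertices of $K$, so $\lvert s_\varepsilon(x) - \varphi(x) \rvert$ is bounded by the oscillation of $\varphi$ on $K$, which tends to zero uniformly as $\delta \to 0$ since $\varphi$ is uniformly continuous. For the gradient, I would work on a single simplex $K$ with vertices $x_1, \dots, x_{d+1}$ and edge vectors $v_i = x_{i+1} - x_1$. On $K$, the gradient $\nabla s_\varepsilon$ is constant and the one-dimensional mean value theorem applied to $t \mapsto \varphi(x_1 + t v_i)$ provides $\xi_i$ on $[x_1, x_{i+1}]$ with $\partial_{v_i} s_\varepsilon = \partial_{v_i} \varphi(\xi_i)$. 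Writing an arbitrary unit direction $w = \sum_{i=1}^d \alpha_i v_i$ with the coefficients $\alpha_i$ uniformly bounded across $K \in \mathcal{T}_\delta$, uniform continuity of $\nabla \varphi$ then yields $\lVert \nabla s_\varepsilon - \nabla \varphi \rVert_\infty < \eta$ provided $\delta$ is small.

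To pass from $W^{1,\infty}$ to $W^{1,p}$, I would use the compact support of $\varphi$: for $\delta$ small enough that every simplex of $\mathcal{T}_\delta$ meeting $\operatorname{supp}(\varphi)$ is contained in $\operatorname{supp}(\varphi) + B_\delta(0)$, we obtain simultaneously the support inclusion $\operatorname{supp}(s_\varepsilon) \subseteq \operatorname{supp}(\varphi) + B_\delta(0)$ and the bound
\[
\lVert s_\varepsilon - \varphi \rVert_{W^{1,p}(\mathbb{R}^d)} \leq \lvert\operatorname{supp}(\varphi) + B_\delta(0)\rvert^{1/p} \cdot \lVert s_\varepsilon - \varphi \rVert_{W^{1,\infty}(\mathbb{R}^d)},
\]
with the measure factor staying bounded as $\delta \to 0$. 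Choosing $\delta < \varepsilon$ small enough that both value and gradient errors are controlled then finishes the proof.

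The main obstacle is the gradient estimate, and specifically ensuring that the coefficients $\alpha_i$ in the edge-basis decomposition of unit vectors remain uniformly bounded across all simplices. For an arbitrary triangulation this could fail if simplices degenerate, but it holds here because all elements of $\mathcal{T}_\delta$ are congruent copies of the same rescaled reference $K_0$, so the linear change of basis from $\{v_i\}$ to the standard frame has a norm independent of which $K$ we look at. This shape-regularity is built into the tiling, and it is the key ingredient making the interpolation error uniform across $\mathbb{R}^d$.
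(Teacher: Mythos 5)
Your proof is essentially identical to the paper's: both build the piecewise linear interpolant of $\varphi$ on a uniformly shape-regular simplicial tiling of width $\delta$, control the $L^\infty$ error by uniform continuity of $\varphi$, control the gradient error by the mean value theorem along edge directions together with the uniform bound on the coefficients $\alpha_i$ (which is exactly the shape-regularity point you emphasize), and then descend from $W^{1,\infty}$ to $W^{1,p}$ using the compact support. If anything, you spell out more carefully the support inclusion and the $L^\infty$-to-$L^p$ step (the factor $\lvert\operatorname{supp}(\varphi)+B_\delta(0)\rvert^{1/p}$), which the paper leaves implicit, but the route and key ideas are the same.
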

We turn to the proof of Theorem \ref{UniversalApproximation} which we state again for the convenience of the reader.

\begin{theorem}[Universal approximation with zero boundary values]
Consider an open set \(\Omega\subseteq\mathbb R^d\) and let \(u\in W^{1,p}_0(\Omega)\) with $p\in[1,\infty)$. Then for all \(\varepsilon>0\) there exists a function \(u_\varepsilon\in W^{1,p}_0(\Omega)\) that can be expressed by a ReLU network of depth \(\lceil \log_2(d+1)\rceil +1\) such that
    \[ \left\lVert u - u_\varepsilon \right\rVert_{W^{1,p}(\Omega)}\le \varepsilon. \]
\begin{proof}
Let $u\in W^{1,p}_0(\Omega)$ and $\varepsilon>0$. By the density of $C_c^\infty(\Omega)$ in $W^{1,p}_0(\Omega)$, see for instance \cite{brezis2010functional}, we choose a smooth function $\varphi_\varepsilon\in C^\infty_c(\Omega)$ such that $\lVert u-\varphi_\varepsilon\rVert_{W^{1,p}(\Omega)}\leq \varepsilon/2$. Furthermore we use Lemma \ref{ApproxLemma} and choose a piecewise linear function $u_\varepsilon$ such that $\lVert \varphi_\varepsilon-u_\varepsilon\rVert_{W^{1,p}(\Omega)}\leq\varepsilon/2$ and such that $u_\varepsilon$ has compact support in $\Omega$.
This yields
\[\lVert u-u_\varepsilon\rVert_{W^{1,p}(\Omega)}\leq\lVert u-\varphi_\varepsilon\rVert_{W^{1,p}(\Omega)}+\lVert \varphi_\varepsilon-u_\varepsilon\rVert_{W^{1,p}(\Omega)}\leq\varepsilon\]
and by Theorem \ref{ArorasTheorem} we know that $u_\varepsilon$ is in fact a realisation of a neural network with depth at most \(\lceil \log_2(d+1)\rceil +1\).
\end{proof}
\end{theorem}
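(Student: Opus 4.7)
The approach will be a three-stage approximation chain: $W^{1,p}_0(\Omega)$ function $\to$ smooth compactly supported function $\to$ piecewise linear function with support in $\Omega$ $\to$ ReLU network of depth $\lceil\log_2(d+1)\rceil+1$. The three analytic ingredients I would invoke are, in order, the density of $C_c^\infty(\Omega)$ in $W^{1,p}_0(\Omega)$, the approximation result of Lemma \ref{ApproxLemma}, and the expressive identification in Theorem \ref{ArorasTheorem}. The error budget $\varepsilon$ will be split as $\varepsilon/2 + \varepsilon/2$ across the first two stages, while the last stage is an exact representation.

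Concretely, given $u\in W^{1,p}_0(\Omega)$ and $\varepsilon>0$, I would first pick a test function $\varphi_\varepsilon\in C_c^\infty(\Omega)$ with $\lVert u-\varphi_\varepsilon\rVert_{W^{1,p}(\Omega)}\le\varepsilon/2$ using density. Viewing $\varphi_\varepsilon$ as an element of $C_c^\infty(\mathbb R^d)$ via extension by zero, I would then apply Lemma \ref{ApproxLemma} to obtain a piecewise linear function $u_\varepsilon\colon\mathbb R^d\to\mathbb R$ with $\lVert \varphi_\varepsilon-u_\varepsilon\rVert_{W^{1,p}(\mathbb R^d)}\le\varepsilon/2$ and a support control $\operatorname{supp}(u_\varepsilon)\subseteq \operatorname{supp}(\varphi_\varepsilon)+B_\eta(0)$ for an $\eta>0$ that I am free to choose. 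Theorem \ref{ArorasTheorem} then represents $u_\varepsilon$ exactly by a ReLU network of depth at most $\lceil\log_2(d+1)\rceil+1$, and the triangle inequality yields $\lVert u-u_\varepsilon\rVert_{W^{1,p}(\Omega)}\le\varepsilon$.

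The delicate point, and the reason the statement delivers $u_\varepsilon\in W^{1,p}_0(\Omega)$ rather than merely $W^{1,p}(\Omega)$, is making sure that the piecewise linear approximant still vanishes in a neighbourhood of $\partial\Omega$ so that its zero extension lies in $W^{1,p}_0(\Omega)$. Since $\operatorname{supp}(\varphi_\varepsilon)$ is a compact subset of the open set $\Omega$, its distance $\delta\coloneqq\operatorname{dist}(\operatorname{supp}(\varphi_\varepsilon),\partial\Omega)$ to the boundary is strictly positive; choosing the parameter $\eta$ in Lemma \ref{ApproxLemma} strictly smaller than $\delta$ then forces $\operatorname{supp}(u_\varepsilon)\subset\Omega$. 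This is the step I expect to be the main obstacle: density alone guarantees only closeness in $W^{1,p}(\Omega)$, and it is the quantitative support bound built into Lemma \ref{ApproxLemma} that upgrades this to membership in $W^{1,p}_0(\Omega)$. Once this is in place, the remaining work is bookkeeping of the two $\varepsilon/2$ error contributions and a direct appeal to the exact ReLU representation of piecewise linear functions.
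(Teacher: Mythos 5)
Your proposal follows exactly the same three-stage chain as the paper's proof: density of $C_c^\infty(\Omega)$ in $W^{1,p}_0(\Omega)$, the piecewise-linear approximation with support control from Lemma~\ref{ApproxLemma}, and the exact ReLU representation from Theorem~\ref{ArorasTheorem}. Your explicit discussion of why the support of $u_\varepsilon$ stays inside $\Omega$ (choosing the shift radius smaller than $\operatorname{dist}(\operatorname{supp}\varphi_\varepsilon,\partial\Omega)$, noting that in the lemma one simply shrinks the single parameter $\varepsilon$ to satisfy both the norm and support requirements) spells out a point the paper leaves implicit, but the argument is the same.
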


\end{document}